\newtheorem{theorem}{Theorem}[section]
\newtheorem{cor}[theorem]{Corollary}
\newtheorem{lemm}[theorem]{Lemma}
\newtheorem{prop}[theorem]{Proposition}
\newtheorem{question}[theorem]{Question}
\theoremstyle{definition}
\newtheorem{defi}[theorem]{Definition}
\newtheorem{remk}[theorem]{Remark}
\newtheorem{exam}[theorem]{Example}
\newtheorem{observ}[theorem]{Observation}
\newcommand{\PP}{{\mathcal P}}
\newcommand{\QQ}{{\mathcal W}}
\newcommand{\bb}{\mbox{\boldmath $b$}}
\newcommand{\hh}{\mbox{\boldmath $f$}}
\newcommand{\aaa}{\mbox{\boldmath $a$}}
\newcommand{\rr}{\mbox{\boldmath $r$}}
\newcommand{\g}{\mbox{\boldmath $g$}}
\newcommand{\LL}{\mbox{\boldmath $\ell$}}
\newcommand{\rad}{\operatorname{rad}\nolimits}
\newcommand{\add}{\operatorname{add}\nolimits}
\newcommand{\gl}{\operatorname{gl.dim}\nolimits}
\newcommand{\pd}{\operatorname{pd}\nolimits}
\newcommand{\id}{\operatorname{id}\nolimits}
\newcommand{\Hom}{\operatorname{Hom}\nolimits}
\newcommand{\Ext}{\operatorname{Ext}\nolimits}
\renewcommand{\mod}{\operatorname{mod}\nolimits}
\def\Im{\mathop{\mathrm{Im}}\nolimits}
\def\Ker{\mathop{\mathrm{Ker}}\nolimits}
\def\Hom{\mathop{\mathrm{Hom}}\nolimits}
\def\End{\mathop{\mathrm{End}}\nolimits}
\def\Ext{\mathop{\mathrm{Ext}}\nolimits}
\def\mod{\mathop{\mathrm{mod}}\nolimits}
\def\add{\mathop{\mathrm{add}}\nolimits}
\begin{document}
\title[APR tilting modules and graded QPs]{APR tilting modules and graded quivers with potential}
\author{Yuya Mizuno}
%\date{\today}
\address{Graduate School of Mathematics\\ Nagoya University\\ Frocho\\ Chikusaku\\ Nagoya\\ 464-8602\\ Japan}
\email{yuya.mizuno@math.nagoya-u.ac.jp}

\begin{abstract}
We study quivers with relations of endomorphism algebras of APR tilting modules. 
It is known that the quivers are given by reflections if the original algebras have global dimension 1. We generalize this result for algebras with global dimension 2. In particular, we give an explicit description of the quivers with relations by mutations of quivers with potential (QPs). 
This result also provides a rich source of derived equivalence classes of algebras in a combinatorial way. 
As an application, we give a sufficient condition of QPs such that 
Derksen-Weyman-Zelevinsky's question \cite[Question 12.2]{DWZ} has a positive answer. 
\end{abstract}
\maketitle

\section{Introduction}
Derived categories have been one of the important tools in the study of many areas of mathematics. 
In the representation theory of algebras, tilting theory plays an essential role to control equivalences  of derived categories. More precisely, endomorphism algebras of tilting modules are derived equivalent to the original algebra. Therefore, the relationship between 
quivers with relations of endomorphism algebras and those of the original algebra is very important and the same has been investigated by various authors.

The first well-known result is a so-called BGP-reflection functor \cite{BGP}.
This is the origin of tilting theory, and now formulated in terms of APR tilting modules \cite{APR}. 
Let us recall an important property of APR tilting modules. 

\begin{theorem}\label{1}\cite{APR}
Let $KQ$ be a path algebra of a finite acyclic quiver $Q$ and $T_k$ 
be the APR tilting $KQ$-module associated with a source $k\in Q_0$.
Then we have an algebra isomorphism 
$$\End_{KQ}(T_k)\cong K({\mu}_kQ),$$ 
where $\mu_k$ is a reflection at $k$.
\end{theorem}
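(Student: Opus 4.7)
The plan is to compute $\End_{KQ}(T_k)$ explicitly by decomposing $T_k$ into its indecomposable summands and evaluating all $\Hom$-spaces between them. First I would recall the definition of the APR tilting module associated to a source $k$: since $k$ is a source, $S_k = P_k$ is simple projective (and, as we may assume $k$ has at least one outgoing arrow, not injective), and
\[
T_k \;=\; \tau^{-1}S_k \;\oplus\; \bigoplus_{i\in Q_0,\, i\neq k} P_i.
\]
Since $KQ$ is hereditary, the almost-split sequence starting at $P_k$ has the form
\[
0 \longrightarrow P_k \longrightarrow \bigoplus_{(\alpha\colon k\to j)\in Q_1} P_j \longrightarrow \tau^{-1}S_k \longrightarrow 0,
\]
the middle term being $\rad^{-1}(P_k)/P_k$-style, built from the arrows out of $k$.

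Next I would use this sequence as the main computational tool. Applying $\Hom_{KQ}(-,P_i)$ and $\Hom_{KQ}(P_i,-)$ and using that $\Ext^1_{KQ}(P_i,-)=0$ for projective $P_i$, one obtains, for every $i\neq k$:
\[
\Hom_{KQ}(\tau^{-1}S_k,\,P_i)\;\cong\;\ker\!\Bigl(\textstyle\bigoplus_{(\alpha\colon k\to j)} \Hom(P_j,P_i) \;\to\; \Hom(P_k,P_i)\Bigr),
\]
and a dual expression for $\Hom(P_i,\tau^{-1}S_k)$, which yields a natural element for each arrow $\alpha\colon k\to j$ in $Q$. In particular each arrow $k\to j$ of $Q$ produces a distinguished map $P_j\to \tau^{-1}S_k$ (the corresponding component of the almost-split epimorphism); these will play the role of the reversed arrows $j\to k$ in $\mu_k Q$. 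Finally $\End(\tau^{-1}S_k)=K$ because $\tau^{-1}S_k$ is indecomposable.

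Then I would construct an explicit $K$-algebra homomorphism $\Phi\colon K(\mu_k Q)\to \End_{KQ}(T_k)$ by sending the idempotent $e_i$ of $\mu_k Q$ to the projection onto the summand $P_i$ (resp.\ $\tau^{-1}S_k$ for $i=k$), sending each arrow of $Q$ not incident to $k$ to the corresponding path-composition map between the $P_i$'s, and sending the reversed arrows $j\to k$ (new in $\mu_k Q$) to the distinguished maps $P_j\to \tau^{-1}S_k$ extracted above. Since $\mu_k Q$ is again acyclic, $K(\mu_k Q)$ is the path algebra of a finite acyclic quiver and has no relations, so to show $\Phi$ is an isomorphism it suffices to check that it is a well-defined algebra map and that it is bijective on dimensions at each pair of idempotents. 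A dimension count via the long exact sequences obtained from the almost-split sequence above reduces this to an identity counting paths in $\mu_k Q$ versus $Q$.

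The main obstacle I expect is the compatibility of multiplication: one must verify that the composition in $\End(T_k)$ of a map $P_i\to P_j$ (arising from a path of $Q$ not passing through $k$) with one of the distinguished maps $P_j\to \tau^{-1}S_k$ reproduces exactly what $K(\mu_k Q)$ would predict via path composition through the reversed arrows, and symmetrically for compositions $\tau^{-1}S_k\to P_i$ that one may extract from the sequence. Once this careful bookkeeping with the almost-split sequence is in hand, the map $\Phi$ is identified as an isomorphism, and the proof is complete.
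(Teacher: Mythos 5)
Your outline is correct and would yield a complete proof of Theorem~\ref{1}: the AR sequence $0\to P_k\to\bigoplus_{a\in Q(k,-)}P_{e(a)}\to\tau^{-}P_k\to 0$ does compute all the relevant $\Hom$-spaces (in particular $\Hom(\tau^{-}P_k,P_i)=0$ for $i\neq k$ because distinct arrows out of $k$ begin distinct paths, $\Hom(P_i,\tau^{-}P_k)\cong\bigoplus_{a}\Hom(P_i,P_{e(a)})$ since $\Hom(P_i,P_k)=0$, and $\End(\tau^{-}P_k)\cong\End(P_k)=K$), and these identifications simultaneously give surjectivity of your map $\Phi$ and the path-counting identity that forces injectivity. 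One small caution: ``bijective on dimensions'' alone does not prove bijectivity --- you should note explicitly that the long exact sequences exhibit every map into $\tau^{-}P_k$ as a sum of compositions through the distinguished maps $g_a$, which is where surjectivity of $\Phi$ comes from; the dimension count then finishes the argument. Note also that well-definedness of $\Phi$ is automatic since $K(\mu_kQ)$ is a tensor algebra with no relations to check. Your route differs from the paper's: the paper does not prove Theorem~\ref{1} directly but cites \cite{APR} and recovers it as the $\gl\Lambda=1$ case of Theorem~\ref{main}, whose proof uses the same explicit assignment on arrows (the $g_a$ become the reversed arrows $a^*$) but verifies the isomorphism via the presentation criterion of \cite[Propositions 3.1, 3.3]{BIRSm}, i.e.\ by checking that $f_1$ is right almost split in $\add T$ and $f_2$ is a pseudo-kernel (Lemmas~\ref{lemm1} and~\ref{lemm2}). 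Your direct computation is more elementary and self-contained for the hereditary case, but it leans on the identity ``$\dim e_jKQe_i=$ number of paths,'' which is special to relation-free path algebras; the paper's machinery is precisely what replaces that dimension count when relations (encoded by the degree-one arrows $\rho_r$) are present.
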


Thus, in the case of path algebras, the quivers of endomorphism algebras of APR tilting modules are completely determined by combinatorial methods.

One of the main purposes of this paper is to generalize the above result for a more general class of algebras. Since path algebras have global dimension at most 1, it is natural to consider  algebras with global dimension at most 2. 
In order to deal with these algebras, we use mutations of quivers with potential (QPs).  

The notion of mutation was introduced by Fomin-Zelevinsky \cite{FZ}, which is
an important ingredient in cluster algebras. After that, Derksen-Weyman-Zelevinsky extend  mutations for QPs. 
It has recently been discovered that mutations of QPs have a close connection with tilting theory, for example \cite{IR,BIRSm,KY}. 
In this paper, we provide another type of connection between mutations and tilting theory.
 
We consider the following steps for an algebra $\Lambda$.

\begin{itemize}
\item[1.] Define the associated graded QP $(Q_\Lambda,W_\Lambda,C_\Lambda)$.
\item[2.] Apply left mutation ${\mu}_k^L$ to $(Q_\Lambda,W_\Lambda,C_\Lambda)$.
\item[3.] Take the truncated Jacobian algebras $\PP({\mu}_k^L(Q_\Lambda,W_\Lambda,C_\Lambda))$.
\end{itemize}
Then we can give a generalization of Theorem \ref{1} as follows.

\begin{theorem}\label{2}$($Theorem \ref{main}$)$
Let $\Lambda$ be a finite dimensional algebra with $\gl\Lambda\leq2$ and $T_k$ be the APR tilting $\Lambda$-module associated with a source $k$.
Then we have an algebra isomorphism
$$\End_\Lambda(T_k)\cong {\mathcal P}({\mu}_k^L(Q_\Lambda,W_\Lambda,C_\Lambda)).$$ 
\end{theorem}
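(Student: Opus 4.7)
My plan is to compute both sides of the claimed isomorphism by explicit quiver–and–relation methods and then match them. The inputs are as follows: because $\gl\Lambda\le 2$, I can present $\Lambda=KQ/I$ with $I$ generated by a minimal set of relations $\rho_1,\ldots,\rho_r$ where $\rho_\ell$ goes from some $j_\ell$ to some $i_\ell$. The associated graded QP $(Q_\Lambda,W_\Lambda,C_\Lambda)$ is then built from $Q$ by adjoining, for every $\rho_\ell$, a new arrow $a_\ell\colon i_\ell\to j_\ell$, declaring the arrows of $Q$ to have degree $0$ and each $a_\ell$ to have degree $1$ (so that $C_\Lambda=\{a_\ell\}$), with potential $W_\Lambda=\sum_\ell a_\ell\rho_\ell$; truncation along $C_\Lambda$ recovers $\Lambda$, since $\partial_{a_\ell}W_\Lambda=\rho_\ell$.

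The first step will be a direct computation of $\End_\Lambda(T_k)$. Write $T_k=\bigl(\bigoplus_{i\neq k}P_i\bigr)\oplus\tau^{-1}S_k$. The source hypothesis yields a short exact sequence $0\to P_k\to\bigoplus_{\alpha\colon k\to j}P_j\to\tau^{-1}S_k\to 0$, which is simultaneously a minimal projective presentation of $\tau^{-1}S_k$ and a minimal left $\add(T_k/\tau^{-1}S_k)$-approximation of $P_k$. Applying $\Hom_\Lambda(T_k,-)$ and using $\gl\Lambda\le 2$ to control second syzygies, I will read off the Gabriel quiver of $\End_\Lambda(T_k)$: the arrows out of $k$ in $Q$ are reversed, the arrows of $Q$ not incident with $k$ are kept, and further arrows ending or starting at $k$ appear, coming from the minimal relations of $I$ involving $k$. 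The relations of $\End_\Lambda(T_k)$ then come from the long exact sequence and the fact that a minimal projective resolution of any simple has length at most $2$.

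The second step is to compute $\PP(\mu_k^L(Q_\Lambda,W_\Lambda,C_\Lambda))$ combinatorially. Because $k$ is a source of $Q$, the only arrows of $Q_\Lambda$ ending at $k$ are the degree-$1$ arrows $a_\ell$ with $i_\ell=k$, corresponding to relations whose target is $k$. Unfolding the DWZ definition of left mutation at $k$ in this configuration, all degree-$0$ arrows out of $k$ get reversed, new composite arrows $[a_\ell\beta]$ appear for each pair $(\beta\colon k\to j,\,a_\ell\colon k\to j_\ell)$, and $W_\Lambda$ is updated and reduced in the standard way. I will then compute the cyclic derivatives with respect to the updated cut, and check that the truncated Jacobian algebra produced has exactly the presentation I obtained in the previous step.

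The main obstacle I anticipate is the bookkeeping around the reduction of the mutated potential: cancelling the two-cycles produced by $\mu_k^L$ and identifying the reduced potential with the relations coming out of the exchange sequence. The assumption $\gl\Lambda\le 2$ enters precisely here, since it guarantees that all higher $\Ext$-data is zero, so $W_\Lambda$ is a complete record of the homological information and nothing essential is discarded upon truncation. Once this matching is carried out, the isomorphism follows by comparing generators and relations via the universal property of the Jacobian algebra.
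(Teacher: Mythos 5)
Your overall strategy --- present both $\End_\Lambda(T_k)$ and the truncated Jacobian algebra of the mutated QP by quivers with relations and match the two presentations --- is indeed the shape of the paper's argument. But the proposal asserts the answer to the hard half rather than deriving it. ``Reading off'' the Gabriel quiver and relations of $\End_\Lambda(T_k)$ from the exchange sequence is precisely the content of the theorem, and the essential ingredients are missing. Concretely: the new arrows of the mutated quiver that do not come from reversing arrows of $Q$ are the $\rho_r^*$, one for each minimal relation $r$ with $s(r)=k$; these must be realized as explicit morphisms $f_r:\tau^-P_k\to P_{e(r)}$, which the paper constructs by lifting the minimal projective resolution of each simple $S_i$ against the almost split sequence $0\to P_k\to\bigoplus_{a}P_{e(a)}\to\tau^-P_k\to 0$. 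You never construct these. Moreover, to certify that a proposed quiver with relations actually presents $\End_\Lambda(T_k)$ one needs a criterion --- the paper invokes the Buan--Iyama--Reiten--Smith characterization in terms of exact sequences $\bigoplus_r\Gamma(\phi s(r))\to\bigoplus_a\Gamma(\phi s(a))\to J_\Gamma(\phi i)\to 0$ --- together with three nontrivial lemmas: a lifting lemma (requiring $\Ext^1_\Lambda(T_k,\Im\LL)=0$, which is where $\id P_k\le 2$ enters via AR duality), an exactness lemma obtained from a mapping cone, and a lemma forcing the $\tau^-P_k$-component of a syzygy to be radical. Your appeal to ``the long exact sequence'' and ``controlling second syzygies'' does not substitute for any of these; in particular your claim that the global dimension hypothesis is used in the reduction of the mutated potential is misplaced --- that reduction is purely combinatorial, and the hypothesis is needed instead to show the candidate relations generate the whole kernel (the paper's Remark 3.12 gives a counterexample with $\id P_k=3$).

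There is also a directional error in your combinatorial step. The relation-arrow for $r$ runs $e(r)\to s(r)$, so the arrows of $Q_\Lambda$ ending at $k$ correspond to relations \emph{starting} at $k$. Relations ending at $k$ cannot occur at all, since $k$ is a source of $Q$ and admits no paths of length $\ge 2$ into it. Taken literally, your claim that the incoming arrows at $k$ come from ``relations whose target is $k$'' would leave no arrows into $k$, so the mutation would degenerate to a plain reflection with no composite arrows $[\rho_r a]$ and no reversed relation-arrows $\rho_r^*$ --- contradicting, e.g., Example 3.3 of the paper. You should fix the orientation and then carry out the matching of the two presentations in earnest, which is where all the work lies.
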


We give two remarks about the theorem. 
First, if $\gl\Lambda= 1$, then ${\mathcal P}({\mu}_k^L(Q_\Lambda,W_\Lambda,C_\Lambda))$ is isomorphic to $K({\mu}_kQ)$. 
Thus, Theorem \ref{2} implies Theorem \ref{1}. 
Secondly, the condition $\gl\Lambda\leq2$ is not necessary. In fact, it is enough to assume that the associated projective module has the injective dimension at most 2. 
%We note that APR tilting modules and graded mutations have entirely different origins and backgrounds.

Our second aim is to formulate the above result in terms of algebraic cuts and to study the 
relationships between graded QPs and tilting theory. 
We will show that, from a given QP with an algebraic cut, the new QP obtained by mutation at a strict source also has an algebraic cut (Proposition \ref{algebraic}). 
It implies that successive mutations give APR-tilted (co-tiled) algebras  
and, consequently, provide a family of derived equivalence classes of algebras. 
It also allows one to treat several representation theoretical problems by combinatorial methods. 
Moreover, these treatments induce an analog of the Keller-Yang's approach \cite{KY}, where instead of Jacobian algebras $\PP(Q,W)$ and $\PP(\mu_k(Q,W))$, 
the authors investigated the Ginzburg dg algebras $\Gamma(Q,W)$ and $\Gamma(\mu_k(Q,W))$ and showed the derived equivalences between them. 
In our cases, we obtain the derived equivalences between the truncated Jacobian algebras $\PP(Q,W,C)$ and $\PP(\mu_k^L(Q,W,C))$ (Proposition \ref{main3}). 

Finally, we give an application to the question posed in \cite{DWZ}.
\begin{question}\cite[Question 12.2]{DWZ}
Is the isomorphism class of $\PP(\mu_k(Q,W))$ determined by the isomorphism
class of $\PP(Q,W)$?

\end{question}
 
We give a sufficient condition for QPs such that the question has a positive answer. (Proposition \ref{app1}). 

\subsection*{Notation and convention}
Let $K$ be an algebraically closed field and put $D:=\Hom_K(-,K)$. 
Let $\Lambda$ be a finite dimensional algebra. 
We denote by $J_\Lambda$ the Jacobson radical of $\Lambda$. 
All modules are left modules. 
We denote by $\mod\Lambda$ the category of finitely generated $\Lambda$-modules and by add$M$ the subcategory consisting of direct summands of finite direct sums of $M$. 
The composition $fg$ of morphisms means first $f$, then $g$.
We denote the set of vertices by $Q_0$ and the set of arrows by $Q_1$ of a quiver $Q$. 
We denote by $a:s(a)\to e(a)$ the start and end vertices of an arrow or path $a$. 
We denote by $\pd X$ and $\id X$, respectively, the projective dimension and injective dimension of a  module $X$.

\section{Background}
In this section, we give a summary of the definitions and results we will use in the next sections. 
See references for more detailed arguments and precise definitions. 
%%%%%%%%%%%%%%%%%
\subsection{Quivers with potential}We review the notions initiated in \cite{DWZ}.

$\bullet$ Let $Q$ be a finite connected quiver (possibly with loops and 2-cycles).
We denote by $KQ_i$ the $K$-vector space with basis consisting of paths of length $i$ in $Q$, and by
$KQ_{i,cyc}$ the subspace of $KQ_i$ spanned by all cycles.
We denote the \emph{complete path algebra} by
$$\widehat{KQ}=\prod_{i\ge0}KQ_i.$$ 
A \emph{quiver with potential} (QP) is a pair $(Q,W)$ consisting of a quiver $Q$ and 
an element $W\in\prod_{i\ge2}KQ_{i,{\rm cyc}}$, called a \emph{potential}.
For each arrow $a$ in $Q$, the \emph{cyclic derivative} $\partial_a:\widehat{KQ}_{cyc} \to \widehat{KQ}$  is defined by the continuous linear map 
satisfying $\partial_a(a_1\cdots a_d)=\sum_{a_i=a}a_{i+1}\cdots a_d a_1\cdots a_{i-1}$ for a cycle $a_1\cdots a_d$.
For a QP $(Q,W)$, we define the \emph{Jacobian algebra} by
\[\PP(Q,W)=\widehat{KQ}/{\mathcal J}(W),\]
where ${\mathcal J}(W)=\overline{\langle \partial_a W \mid a \in Q_1 \rangle}$
is the closure of the ideal generated by $\partial_aW$ with respect to the $J_{\widehat{KQ}}$-adic topology.

$\bullet$ 
Two potentials $W$ and $W'$ are called \emph{cyclically equivalent} if $W-W'$ lies in 
the closure of the span of all elements of the form $a_1\cdots a_d-a_2\cdots a_da_1$, where $a_1\cdots a_d$ is a cyclic path.

$\bullet$   
A QP $(Q,W)$ is called \emph{trivial} if $W$ is a linear combination
of cycles of length 2 and $\PP(Q,W)$ is isomorphic to the semisimple algebra $\widehat{KQ_0}$.
It is called \emph{reduced} if $W\in\prod_{i\ge3}KQ_{i,{\rm cyc}}.$

$\bullet$ For two QPs $(Q',W')$ and $(Q'',W'')$ such that $Q_0'=Q_0''$, we define a new QP $(Q,W)$ as a direct sum $(Q',W')\oplus(Q'',W'')$, where 
$Q_0=Q_0'(=Q_0'')$, $Q_1=Q_1'\coprod Q_1''$ and $W=W'+W''$.

%\begin{defi}
$\bullet$ For each vertex $k$ in $Q$ not lying on a loop (but possibly lying on a 2-cycle), we define a new QP $\widetilde{\mu}_k(Q,W):=(Q',W')$ as follows. %Replacing $W$ by a cyclically equivalent potential, we can assume that no cycles in $W$ start and end at $k$.

\begin{enumerate}

\item$Q'$ is a quiver obtained from $Q$ by the following changes.

$\bullet$ Replace each arrow $a:k\to v$ in $Q$ by a new arrow $a^*:v \to k$.

$\bullet$ Replace each arrow $b:u\to k$ in $Q$ by a new arrow $b^*:k \to u$.

$\bullet$ For each pair of arrows $u\overset{b}{\to} k\overset{a}{\to}  v$, 
add a new arrow $[ba] : u \to v$.
\item$W' = [W] + \Delta$ is defined as follows.

$\bullet$  $[W]$ is obtained from the potential $W$ by replacing all compositions $ba$ by the new arrows $[ba]$ for each pair of arrows $u\overset{b}{\to} k\overset{a}{\to}  v$. 

$\bullet$  $\Delta={\displaystyle\sum_{\begin{smallmatrix}a,b\in Q_1\\
e(b)=k=s(a)\end{smallmatrix}}}[ba]a^*b^*$. 
\end{enumerate}
%\end{defi}

\subsection{Truncated Jacobian algebras}

\begin{defi}\label{truncated}\cite{HI}
Let $(Q,W)$ be a QP.
A subset $C \subset Q_1$ is called a \emph{cut} if each cycle appearing $W$ contains exactly one arrow of $C$. Then we define the \emph{truncated Jacobian algebra} by
$$\PP(Q,W,C) := {\mathcal P}(Q,W)/ \overline{\langle C \rangle} = \widehat{ KQ_C}/\overline{ {\langle\partial_cW\ |\ c \in C  \rangle} },$$
where $Q_C$ is the subquiver of $Q$ with vertex set $Q_0$ and arrow set $Q_1\setminus C$.
\end{defi}

Here, we recall some elementary notions. 
A \emph{basic} element of $\widehat{KQ}$ is a formal linear sum of paths with a
common start and a common end. 
Basic elements are called \emph{relations} if each element is a path of length at least 2.
A set of relations $R$ is called \emph{minimal} if $r\notin \overline{ {\langle R \setminus\{r\} \rangle}}$ for any $r\in R$. 
Then, as introduced by Keller, we can naturally define a QP with a cut from a given algebra. The reason we deal with completion is noted in \cite[Section 7]{BIKR}. 

\begin{defi}\cite{K}\label{construct QP}
Let $Q$ be a finite connected quiver and 
$\Lambda = \widehat{KQ}/\overline{{\langle R\rangle}}$ be a finite dimensional algebra with a minimal set  $R$ of relations.

Then we define a QP $(Q_\Lambda,W_\Lambda)$ as follows.
\begin{itemize}
\item[(1)]  $({Q_\Lambda})_0 = Q_0$.
\item[(2)]  $({Q_\Lambda})_1 = Q_1 \coprod C_\Lambda,$ where $C_\Lambda:=\{ \rho_r:e(r)\to s(r)\ |\ r \in R \}$. 
\item[(3)]  $W_\Lambda = {\displaystyle\sum_{\begin{smallmatrix}r \in R\end{smallmatrix}}}\rho_r r$.
\end{itemize}
\end{defi}

%(Note that $R$ is not unique.)

Then the set $C_\Lambda$ gives a cut of $(Q_\Lambda,W_\Lambda)$.

%%%%%%%%%%%%%%%%%
\subsection{APR tilting modules}

We call a $\Lambda$-module $T$ \emph{tilting module} if 
(i)  $\pd T\leq 1$, (ii)  $\Ext_\Lambda^1(T,T)=0,$ and (iii) there exists a short exact sequence 
$0\rightarrow \Lambda \rightarrow T_0 \rightarrow T_1 \rightarrow0$ with $T_0,T_1$ in $\add T$.
 
We call a $\Lambda$-module $T'$ \emph{co-tilting module} if $DT'$ is a tilting $\Lambda^{op}$-module. Then we introduce one of the most fundamental classes of tilting modules as follows.

\begin{defi}\label{tilt}
Let $\Lambda$ be a basic finite dimensional algebra and $P_k$ be a simple projective non-injective $\Lambda$-module associated with a source $k$ of the quiver of $\Lambda$. 
Then, the $\Lambda$-module $T_k:=\tau^-P_k\oplus\Lambda/P_k$ is called an \emph{APR tilting module}, where $\tau^-$ denotes the inverse of the Auslander-Reiten translation. 
Dually, we define APR co-tilting module. %$D\Lambda/{I}\oplus \tau I$ for a simple injective non-projetive module $I$. 
\end{defi} 

We denote by $\nu_k^L$ a \emph{left tilting mutation} \cite{HU,RS,U} for a source and non-sink $k$, which is given by a minimal left $\add(\Lambda/P_k)$-approximation of $P_k$ and its cokernel. Then, we have $\nu_k^L(\Lambda)=\tau^-P_k\oplus\Lambda/P_k$. Thus, APR tilting modules are induced by tilting mutation. Dually, we denote by $\nu_k^R$ a \emph{right co-tilting mutation} for a sink and non-source $k$. 
%and $\nu_k^R(\Lambda)=\tau I_k\oplus D\Lambda/I_k$.  

%%%%%%%%%%%%%%%%%%%%%%%%%%%%%%%%%%%%%%%%%%%%%%%%%%%%%%%%%%%%%%%%%%%%%%%%%%%%%%%%%%%%%%%%%%%%%%
\section{Main theorem}
%%%%%%%%%%%%%%%%%%%%%%%%%%%%%%%%%%%%%%%%%%%%%%%%%%%%%%%%%%%%%%%%%%%%%%%%%%%%%%%%%%%%%%%%%%%%%%
\subsection{Main result}

Let $Q$ be a finite connected quiver %(possibly with loops)
and $\Lambda = \widehat{KQ}/\overline{{\langle R\rangle}}$ be a finite dimensional algebra with a minimal set of relations. Assume that $P_k$ is the simple projective non-injective $\Lambda$-module associated with a source $k\in Q_0$. Our aim is to determine the quiver and the set of relations of $\End_\Lambda(T_k)$.

Consider the associated QP $(Q_\Lambda,W_\Lambda,C_\Lambda)$ of $\Lambda$ and we put 
$\widetilde{\mu}_k(Q_\Lambda,W_\Lambda,C_\Lambda)=(Q',W',C')$, 
where $W'$ and $C'$ are given by 
$$W'=\ [{\displaystyle\sum_{\begin{smallmatrix}r \in R\end{smallmatrix}}}  \rho_rr]+{\displaystyle\sum_{\begin{smallmatrix}a\in Q_1,r\in R\\
s(a)=k=s(r)\end{smallmatrix}}}[\rho_ra]a^*\rho_r^*,$$ 
$$C'=\{\ \rho_r\ |\ r\in R,\ s(r)\neq k \} \coprod\{\ [\rho_ra]\ |\ a\in Q_1,\ r\in R,\ s(a)=k=s(r)\}.$$
It is easy to check that $C'$ is a cut of $(Q',W')$. 

Then, we can achieve our aim as follows.
our main result of this paper is the following theorem. 
\begin{theorem}\label{main}
Let $\Lambda=\widehat{KQ}/\overline{{\langle R\rangle}}$ be a finite dimensional algebra with a minimal set $R$ of relations and  
%Assume that $P_k$ is the simple projective non-injective $\Lambda$-module associated with a source $k\in Q$.  
$T_k:=\tau^-P_k\oplus \Lambda/P_k$ be the APR tilting module.
Then if $\id P_k\leq 2$, we have an algebra isomorphism $$\End_\Lambda(T_k)\cong {\mathcal P}(\widetilde{\mu}_k(Q_\Lambda,W_\Lambda,C_\Lambda)).$$ 
\end{theorem}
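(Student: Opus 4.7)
The plan is to build an explicit algebra map $\phi:\PP(\widetilde{\mu}_k(Q_\Lambda,W_\Lambda,C_\Lambda))\to\End_\Lambda(T_k)$, arrow by arrow, and check it is an isomorphism. The technical engine is the minimal injective co-resolution of $P_k$. Because $P_k=S_k$ is simple at a source $k$ and $\id P_k\le 2$, this co-resolution has the form $0\to P_k\to I^0\to I^1\to I^2\to 0$. Computing $\dim\Ext^j(S_v,S_k)$ from the minimal projective resolution of each $S_v$, and using the standard identifications of arrows of $Q$ and relations in $R$ with bases of $\Ext^1$ and $\Ext^2$ between simples, one gets $I^0=I_k$, $I^1=\bigoplus_{a\in Q_1,\,s(a)=k}I_{e(a)}$, and $I^2=\bigoplus_{r\in R,\,s(r)=k}I_{e(r)}$. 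Applying the inverse Nakayama functor to $I^0\to I^1$ yields the short exact sequence
$$0\to P_k\to \bigoplus_{a\in Q_1,\,s(a)=k}P_{e(a)}\to \tau^-P_k\to 0,$$
with the left map sending $e_k$ to $(a)_a$.

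The irreducible morphisms among summands of $T_k$ are then read off. Arrows of $Q$ not incident to $k$ give the evident maps between projectives. Applying $\Hom_\Lambda(P_v,-)$ to the above sequence and using $e_v\Lambda e_k=0$ for $v\ne k$ (as $k$ is a source) shows $\Hom_\Lambda(P_v,\tau^-P_k)$ is spanned, modulo the radical squared, by the canonical maps $a^*:P_{e(a)}\to\tau^-P_k$, one per arrow $a\in Q_1$ with $s(a)=k$. Dually, $\Hom_\Lambda(\tau^-P_k,P_v)$ is the kernel of $(f_a)_a\mapsto\sum_a a\cdot f_a$, and for each $r\in R$ with $s(r)=k$ the tuple $(r_a)_a$ from the decomposition $r=\sum_a a\cdot r_a$ in $\widehat{KQ}$ defines $\rho_r^*:\tau^-P_k\to P_{e(r)}$. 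Together this matches $Q'_{C'}$ exactly. The relations are matched next: $\partial_{\rho_r}W'=r$ for $s(r)\ne k$ holds in $\End_\Lambda(T_k)$ because the path $r$ does not touch $k$, and a direct composition computation gives $a^*\rho_r^*=r_a=\partial_a r$ in $e_{e(a)}\Lambda e_{e(r)}$, matching the relation $\partial_{[\rho_r a]}W'=\partial_a r+a^*\rho_r^*$ up to a sign absorbed into $\rho_r^*$. This produces a well-defined $\phi$.

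The main obstacle I anticipate is showing $\phi$ is bijective. For surjectivity one must verify that the tuples $(r_a)_a$ coming from the minimal relations in $R$ with $s(r)=k$ span the whole kernel presenting $\Hom_\Lambda(\tau^-P_k,P_v)$; for injectivity one must check that no extra relations beyond those extracted from $W'$ hold in $\End_\Lambda(T_k)$. Both assertions rest on the hypothesis $\id P_k\le 2$, which ensures $I^2$ is the terminal term: via $\Ext^{\ge 3}(-,P_k)=0$, the second syzygies out of $k$ are generated precisely by the minimal relations starting at $k$, with no ``higher relations between relations'' to account for. A final comparison of dimensions $\dim e_i\PP e_j$ with $\dim e_i\End_\Lambda(T_k)e_j$, which already agree on the summands not involving $\tau^-P_k$, then closes the argument.
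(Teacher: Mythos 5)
Your construction of the comparison map is essentially the paper's: the images you assign to the arrows (the maps $a^*$ given by the components $g_a$ of the cokernel map of $0\to P_k\to\bigoplus_{s(a)=k}P_{e(a)}\to\tau^-P_k\to0$, and $\rho_r^*$ given by the tuple of left derivatives $(\partial^l_ar)_a$, up to sign) coincide with the paper's $\phi'$, your verification that the cut relations $r$ (for $s(r)\neq k$) and $\partial^l_ar+a^*\rho_r^*$ hold in $\End_\Lambda(T_k)$ is correct, and producing the short exact sequence from the minimal injective copresentation and $\nu^{-1}$ is a legitimate variant of invoking the almost split sequence. The gap is bijectivity, which is the actual content of the theorem. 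The paper never compares dimensions: it applies the Buan--Iyama--Reiten--Smith criterion (Theorem \ref{birs2}), reducing ``this quiver and these relations present $\End_\Lambda(T_k)$'' to exactness of explicit sequences, verified in Lemmas \ref{lemm1} and \ref{lemm2}; the hypothesis $\id P_k\leq 2$ enters only through Lemma \ref{lift} ($\Ext_\Lambda^3(S_i,P_k)=0$ forces $P_k\notin\add\Im\LL$, hence by AR duality $\Ext^1_\Lambda(T_k,\Im\LL)\cong D\overline{\Hom}_\Lambda(\Im\LL,P_k)=0$), and that vanishing is precisely what guarantees that every relation among the chosen generators is a consequence of the listed ones (the pseudo-kernel step in Lemma \ref{lemm2}).

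Your two closing moves do not supply this. First, the fact you attribute to $\id P_k\leq 2$ --- that the kernel of $(f_a)_a\mapsto\sum_a af_a$ is generated by the tuples $(\partial^l_ar)_a$ with $r\in R(k,-)$ --- is the right-module analogue of the resolution (\ref{proj resol}) and follows from minimality of $R$ alone; it yields surjectivity onto $\Hom_\Lambda(\tau^-P_k,P_v)$ but says nothing about the absence of extra relations at an arbitrary vertex $i$ (also, generating $\rad\End_\Lambda(\tau^-P_k)$ by composites of your arrows needs the right almost split property of the cokernel map, which you never establish since you built the sequence via $\nu^{-1}$ rather than as the AR sequence). Second, the ``final comparison of dimensions'' is circular: there is no independent computation of $\dim e_i\,\PP(\widetilde{\mu}_k(Q_\Lambda,W_\Lambda,C_\Lambda))\,e_j$ --- even its finite dimensionality is part of what is being proved --- and the claim that the dimensions ``already agree on the summands not involving $\tau^-P_k$'' is unjustified, because for $i,j\neq k$ the space $e_i\PP e_j$ contains classes of paths through $k$ (through $\rho_r^*$ and $a^*$), so it cannot be read off from $\Lambda$ before the theorem is known. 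That injectivity genuinely needs $\id P_k\leq 2$, and not merely the shape of the injective coresolution, is shown by the Remark following the paper's proof: for $\langle R\rangle=\langle ab,bcd\rangle$ with $\id P_1=3$, your map is still defined and surjective, but its kernel is strictly larger than $\overline{\langle\partial_cW'\mid c\in C'\rangle}$ (the relation corresponding to $\rho^*cd$ is not generated). So you need an argument in the style of Lemma \ref{lift}, or the criterion of Theorem \ref{birs2}, to close the proof.
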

Note that the assumption $\id P_k\leq 2$ is automatic if $\gl\Lambda=2$.
%Thus our theorem give a generalization of Theorem \ref{1} from $\gl\Lambda=1$ to $\gl\Lambda=2$.

Here, we will explain the choice of $C'$. In fact, $C'$ is naturally obtained by using graded mutations. For this purpose, we recall graded QPs. We follow \cite{AO}. 

\subsection*{Graded quivers with potential}
\begin{defi}
Let $(Q,W)$ be a QP. If we define a map $d:Q_1\to\mathbb{Z}$, then the degree map on arrows induces a degree map on paths. 
We call a QP $(Q,W,d)$ $\mathbb{Z}$-\emph{graded QP} if each arrow $a\in Q_1$ has a degree $d(a)\in\mathbb{Z}$, and  \emph{homogeneous of degree} $l$ if each term in $W$ has a degree $l$. 
In this case, $(Q,W,d)$ naturally induces a grading on $\widehat{KQ}=\prod_{i\ge0}(\widehat{KQ})_i$ and $\PP(Q,W)=\prod_{i\ge0}\PP(Q,W)_i$. 

$\bullet$ Two $\mathbb{Z}$-graded QPs $(Q,W,d)$ and $(Q',W',d')$ are called \emph{graded right-equivalent} if $Q_0=Q_0'$ and 
there exists a degree preserving isomorphism
$\phi: (\widehat{KQ},d)\to(\widehat{KQ'},d')$ such that $\phi|_{\widehat{KQ_0}}={\rm id}$ and $\phi(W)$ is cyclically equivalent to $W'$. 
 
$\bullet$ 
Let QP $(Q,W,d)$ be a $\mathbb{Z}$-graded QP of degree $l$.
For each vertex $k$ in $Q$ not lying on a loop (but possibly lying on a 2-cycle), we define a new graded QP $\widetilde{\mu}_k^L(Q,W,d):=(Q',W',d')$ as follows.

\begin{enumerate}
\def\labelenumi{(\theenumi)}
\item $(Q',W')=\widetilde{\mu}_k(Q,W).$
\item 
The new degree $d'$ is defined as follows:

$\bullet$ $d'(a)= d(a)$ for each arrow $a \in Q\cap Q'$.

$\bullet$ $d'(a^*) = -d(a)$ for each arrow $a:k\to v$ in $Q$.

$\bullet$ $d'(b^*) = -d(b) + l$ for each arrow $b:u\to k$ in $Q$.

$\bullet$ $d'([ba]) = d(a) + d(b)$ for each pair of arrows $u\overset{b}{\to} k\overset{a}{\to}  v$ in $Q$.
\end{enumerate}

In particular, $\widetilde{\mu}_k^L(Q,W,d)$ also has a potential of degree $l$. 
Similarly, we can define $\widetilde{\mu}_k^R$ at $k$. 
In this case, we define $d'(b^*) = -d(b)$ for each arrow $b:u\to k$ in $Q$ and 
$d'(a^*) = -d(a) + l$ for each arrow $a:k\to v$ in $Q$. 
\end{defi}

If $(Q,W)$ has a cut $C$, we can identify it with a $\mathbb{Z}$-graded QP of degree 1 
associating a grading on $Q$ by
\[
d_C(a) = \begin{cases} 1 & a \in C \\
0 & a \not \in C.
\end{cases}
\]

We denote by $(Q,W,C)$ the graded QP of degree 1 with this grading. 
Then we can interpret the truncated Jacobian algebra as the degree 0 part of $\PP(Q,W)$. 
If all arrows of $\widetilde{\mu}_k^L(Q,W,C)$ have degree 0 or 1, then degree 1 arrows give a cut of $\widetilde{\mu}_k(Q,W)$ since $\widetilde{\mu}_k^L(Q,W,C)$ is homogeneous of degree $1$. 
Therefore a cut of $\widetilde{\mu}_k(Q_\Lambda,W_\Lambda)$ is naturally induced as 
degree 1 arrows of left mutation $\widetilde{\mu}_k^L(Q_\Lambda,W_\Lambda,C_\Lambda)$ and the above $C'$ is 
obtained in this way. 
Furthermore, as in \cite[Theorem 6.4]{AO} and \cite[Theorem 4.6]{DWZ}, 
we consider the following lemma.

\begin{lemm}
For a graded QP $(Q,W,C)$, the graded splitting theorem holds. Namely 
$(Q,W,C)$ is graded right-equivalent to the direct sum 
$$(Q_{red},W_{red},C_{red})\oplus(Q_{tri},W_{tri},C_{tri}),$$ 
where $(Q_{red},W_{red},C_{red})$ is reduced and $(Q_{tri},W_{tri},C_{tri})$ is trivial, both unique up to graded right-equivalence.
\end{lemm}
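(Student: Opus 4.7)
The plan is to adapt the proof of the (ungraded) Splitting Theorem of Derksen--Weyman--Zelevinsky \cite[Theorem 4.6]{DWZ}, in the formulation of \cite[Theorem 6.4]{AO}, by carrying the grading along at every step. Recall that the ungraded proof proceeds by decomposing $W = W^{(2)} + W^{(\geq 3)}$ (the splitting according to path length), putting $W^{(2)}$ into the normal form $\sum_i a_ib_i$ via a unitriangular change of arrows, and then iteratively eliminating each $a_i, b_i$ using the relations $\partial_{a_i}W=0$ and $\partial_{b_i}W=0$, which read $b_i = -\Phi_i$ and $a_i = -\Psi_i$ for some $\Phi_i,\Psi_i\in\widehat{KQ}_{\geq 2}$. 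The construction produces, in the limit, a right-equivalence $\phi\colon\widehat{KQ}\to\widehat{KQ'}\times\widehat{KQ''}$ realizing the desired decomposition.

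The key observation is that every operation used in this construction is automatically degree-preserving once $W$ is homogeneous of degree $l$. Indeed, the initial normalization of $W^{(2)}$ only mixes arrows $\alpha,\beta$ sharing the same source and target and satisfying $d(\alpha)+d(\beta)=l$; grouping the 2-cycle basis within each fixed degree gives a change of variables fixing $\widehat{KQ_0}$ and preserving $d$. For the elimination step, the cyclic derivative $\partial_{a_i}W$ is homogeneous of degree $l-d(a_i)=d(b_i)$, so $\Phi_i$ is a linear combination of paths of degree exactly $d(b_i)$, and hence the substitution $b_i\mapsto -\Phi_i$ extends to a degree-preserving continuous algebra automorphism of $\widehat{KQ}$. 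Iterating and passing to the $J_{\widehat{KQ}}$-adic limit as in \cite{DWZ}, the composite automorphism $\phi$ is degree-preserving. Declaring $C_{tri}:=\phi(C)\cap (Q_{tri})_1$ and $C_{red}:=\phi(C)\cap (Q_{red})_1$ gives cuts of the two summands, because $\phi$ preserves the subspace of degree-1 arrows.

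For uniqueness, I would use the uniqueness in \cite[Theorem 4.6]{DWZ}: any two splittings differ by an ungraded right-equivalence between the reduced parts (and trivially between the trivial parts). Applying the same elimination scheme to the difference and invoking the homogeneity argument above upgrades this right-equivalence to a degree-preserving one, giving uniqueness up to graded right-equivalence. The main obstacle is verifying that the inductive substitutions converge in the $J$-adic topology while respecting the grading at every finite stage; this reduces to checking that replacing $b_i$ by a degree-$d(b_i)$ element of $\widehat{KQ}_{\geq 2}$ in the remaining potential produces terms whose length strictly grows, so the iteration terminates on each graded component and homogeneity of $W$ is preserved throughout. Since these are precisely the two features that drive the ungraded proof, the graded version follows once both are tracked simultaneously.
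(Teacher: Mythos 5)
Your proposal is correct and takes essentially the same route as the paper: both adapt the Derksen--Weyman--Zelevinsky splitting theorem (in the graded formulation of Amiot--Oppermann) by checking that the normalization of the length-2 part of $W$ and the subsequent elimination substitutions are degree-preserving, so the reduction does not destroy homogeneity. The paper's version is terser and additionally exploits the special feature of cut-gradings---every arrow has degree $0$ or $1$ and $W$ is homogeneous of degree $1$, so each $2$-cycle in $W$ pairs one degree-$1$ arrow with one degree-$0$ arrow---but the substance matches yours.
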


\begin{proof}
Since each arrow has degree 0 or 1 and $W\in\prod_{i\ge2}KQ_{i,{\rm cyc}}$, there exists no potential consisting of one arrow. 
Therefore, a path of length 2 in $W$ is of the form $ab$, where $a$ is a degree 1 arrow and $b$ is a degree 0 arrow, and hence they are distinct arrows. 
Then, by the similar argument of \cite[Proposition 4.4]{DWZ}, the trivial QP is graded right-equivalent to a graded QP $(Q',W',C')$ such that $Q'$ consists of $2n$ distinct arrows and $W'=\sum_{i=1}^na_ib_i$ and $C'=\{a_i\ |\ i=1,\ldots,n \}$. 
Then the arguments of \cite[Theorem 4.6]{DWZ} still work and, as pointed out in \cite[Theorem 6.4, Proposition 6.5]{AO}, the right-equivalence given in the proof of \cite[Lemma 4.8]{DWZ} is a degree preserving isomorphism and  the reduction process does not change the homogeneity of the potential. 
\end{proof} 

We call $(Q_{red},W_{red},C_{red})$ a \emph{reduced part} of $(Q,W,C)$. 
%As a result, we can define a reduced part of $\widetilde{\mu}_k^L(Q_\Lambda,W_\Lambda,C_\Lambda)$. 
We denote a reduced part of $\widetilde{\mu}_k^L(Q_\Lambda,W_\Lambda,C_\Lambda)$ by ${\mu}_k^L(Q_\Lambda,W_\Lambda,C_\Lambda)$ and call it a \emph{left mutation}. 
Similarly, we denote a \emph{right mutation} by ${\mu}_k^R$. 
%((Remark that potentials of a path of length 1 do not appear. )
%Thus degree 1 arrows give a cut of $\mu_k^L(Q_\Lambda,W_\Lambda,C_\Lambda)$.  
%since each arrow has degree 0 or 1 and the potential is homogeneous of degree $1$. 
%Therefore degree 1 arrows give a cut of $\mu_k(Q_\Lambda,W_\Lambda)$ 

Because we have $\PP(\widetilde{\mu}_k^L(Q_\Lambda,W_\Lambda,C_\Lambda))\cong\PP({\mu}_k^L(Q_\Lambda,W_\Lambda,C_\Lambda))$, 
we can rewrite Theorem \ref{main} that we have an algebra isomorphism $$\End_\Lambda(T_k)\cong\PP(\mu_k^L(Q_\Lambda,W_\Lambda,C_\Lambda)).$$ 

%(Note that even though $k$ is source of $Q$, it may lie on a 2-cycle of $Q_\Lambda$ and $[\rho_ra]$ may be a loop (see Example \ref{exam1} (3)).)

\subsection{Examples}
In this subsection, we explain the theorem by some examples. We keep the assumption of Theorem \ref{main}.  
\begin{exam}
If $\gl\Lambda=1$, then we have $\Lambda=KQ$. Hence, we have 
$${\mathcal P}({\mu}_k^L(Q_\Lambda,W_\Lambda,C_\Lambda))
={\mathcal P}({\mu}_k^L(Q,0,\emptyset))= K({\mu}_kQ).$$ 
Thus, the mutation procedure is just reversing arrows having $k$. 
Therefore, the above theorem gives the classical result Theorem \ref{1}. 
\end{exam}

\begin{exam}\label{exam1}
Let $\Lambda=\widehat{KQ}/\overline{{\langle R\rangle}}$ be a finite dimensional algebra given by the following quiver with a relation.
$$\xymatrix@C20pt@R10pt{ &2 \ar[rd]^{b}     & \\
1\ar[ru]^{a} \ar[rd]_{c}   &   &4  \ar@{.}[ll]&\\
 &3     \ar[ru]_d & &{\langle R\rangle}=\langle ab\rangle.}\ \ \ \ \ \ 
$$

Then we consider the APR tilting module $T_1:=\tau^-P_1\oplus \Lambda/P_1$ and calculate $Q'$ and $R'$  
satisfying $\widehat{KQ'}/\overline{{\langle R'\rangle}}\cong \End_\Lambda(T_1)$ by the following steps. Here, we denote degree 1 arrows by dotted arrows.

$$\xymatrix@C20pt@R10pt{ &2 \ar[rd]^{b}     & \\
1\ar[ru]^{a} \ar[rd]_{c}   &   &4  \ar@{.}[ll]&\overset{(Q_\Lambda,W_\Lambda,C_\Lambda)}{\Longrightarrow}\\
 &3     \ar[ru]_d & }\ \ \ \ 
 \xymatrix@C20pt@R10pt{ &2 \ar[rd]^{b}     & \\
1\ar[ru]^{a} \ar[rd]_{c}   &   &4  \ar@{-->}[ll]_{\rho}&\overset{\widetilde{\mu}_1^L}{\Longrightarrow}\\
 &3     \ar[ru]_d & }\ \ \ \ \ \ 
\xymatrix@C20pt@R10pt{ &2 \ar[ld]_{a^*}  \ar[rd]^{b}   \\
1  \ar[rr]^{\rho^*} &   &4 \ar@/_4mm/@{-->}[lu]_{[\rho a]}  \ar@/^4mm/@{-->}[ld]^{[\rho c]} \\ 
&3  \ar[ru]^{d}   \ar[lu]^{c^*}  }$$
$ \ \ \ \ \  \ \ \ \ \  \  \ \ \ {\langle R\rangle}=\langle ab\rangle . \ \ \ \ \  \ \  \ \ \ \ \ \ \ \  \ \ \ \ \ \ \ \ \  \ \ \ \ \ \ \ \ \  \ \ \ \ \ \ \ \ \  \ \ \ W_\Lambda= \rho ab.   \ \ \ \ \ \ \ \ \ \ \ \ W'= [\rho a]b +  [\rho a]a^*\rho^*+[\rho c]c^*\rho^*.$ 
 
$$\xymatrix@C20pt@R10pt{ &&2 \ar[ld]_{a^*}   & \\
\overset{{\mu}_1^L}{\Longrightarrow}&1  \ar[rr]^{\rho^*} &   &4  \ar@/^4mm/@{-->}[ld]^{[\rho c]}&\overset{\PP({\mu}_1^L(Q_\Lambda,W_\Lambda,C_\Lambda))}{\Longrightarrow}\ \ \ \ \ \ Q'= \\
 &&3  \ar[ru]^{d}   \ar[lu]^{c^*} & }
 \xymatrix@C20pt@R10pt{ &2 \ar[ld]_{a^*}     & \\
  1  \ar[rr]^{\rho^*} &   &4   \ar@/^4mm/@{.}[ld]^{}\\
 &3  \ar[ru]^{d}   \ar[lu]^{c^*} & }$$
$ \ \ \ \ \  \ \ \ \ \ \ \ \ \ \  \ \ \ \ \    \  \ \ \ W'= [\rho c]c^*\rho^*. \ \ \ \ \ \  \ \ \  \ \ \ \ \ \  \ \ \ \ \ \ \ \ \ \  \ \ \ \ \ \ \ \ \  \ \ \ \ \ \  \ \ \ {\langle R'\rangle}=\langle c^*\rho^* \rangle.$\\

Below we give more examples. 
From the left-hand side algebra $\Lambda$, we obtain the quiver and the set of relations of  $\End_\Lambda(T_1)$, which is given by right-hand side in the following diagrams. 
\begin{enumerate}

\item In this example, $Q_\Lambda$ coincide with the above quiver, but the new quiver is  different.
$$\xymatrix@C20pt@R10pt{ &2 \ar[rd]^{b}     & \\
1\ar[ru]^{a} \ar[rd]_{c}   &   &4  \ar@{.}[ll]&\overset{}{\Longrightarrow}\\
 &3     \ar[ru]_d & }\ \ \ \ \ \ 
\xymatrix@C20pt@R10pt{ &2 \ar[ld]_{a^*}     & \\
1  \ar[rr]^{\rho^*} &   &4  \\
 &3     \ar[lu]^{c^*} & }$$ 
$$ \ \ \ \ \ \ \ \ \ \ \ \ \ \ \ {\langle R\rangle}=\langle ab=cd\rangle. \   \ \ \ \ \ \ \ \ \ \ \  \ \ \ \ \ \ \ \ \ {\langle R'\rangle}=\langle 0 \rangle.\ \ \ \ \  \ \ \ \ \ \ \ \ \ \ \ \ \ $$

\item We do not need to assume that $\gl\Lambda\leq2$.

$\xymatrix@C40pt@R20pt{
1\ar[r]^{a}   &2     \ar[r]^{b} &3 \ar[d]^c &\overset{}{\Longrightarrow}\\
4\ar@(ul,dl)_f\ar[r]^d    &5     \ar[r]^e &6\ar@{.}[llu] \ar@/^4mm/@{.}[ll]} \ \ \ \ \ \ \ \ \ \xymatrix@C40pt@R20pt{
1 \ar[rrd]|{\rho^*}&2  \ar[l]_{a^*}   \ar@{.}[rd]  \ar[r]^{b} &3 \ar[d]^c &\\
4\ar@(ul,dl)_f\ar[r]^d    &5     \ar[r]^e &6 \ar@/^4mm/@{.}[ll]}$

$$ \ \ \ \ \ \ \ \ \ \  \ \ \ \ \ \ \ \ \ \ \ \  {\langle R\rangle}=\langle abc,de,f^3 \rangle. \ \ \ \ \ \  \ \ \ \ \ \ \ \ \ \ \ \ \ \ \ \ \ \ \ \ \ \ \ \ \ \ \ \ {\langle R'\rangle}=\langle 
a^*\rho^*+bc,de,f^3 \rangle.$$

\item We do not need to assume that $Q_\Lambda$ has no multiple arrows.

$\xymatrix@C40pt@R30pt{
1\ar@{=>}[d]_{a_2}^{a_1}  &  &4 \ar@{:}[ll]&\overset{}{\Longrightarrow}  \\
2 \ar[rr]^b   & & 3 \ar[u]_{c} &  }$\ \ \ \ \ \ \  \ \
$\xymatrix@C40pt@R30pt{
1  \ar@{=>}[rr]_{\rho_1^*}^{\rho_2^*} &  &4\ar@{:}[lld]  \ar@/_4mm/@{.}[lld]   \\
2 \ar@/^6mm/@{.}[rru]\ar[rr]^b \ar@{=>}[u]_{a_1^* }^{a_2^*}  & & 3 \ar[u]^c}$
$$\ \ \ \ \ \ \ \ \ \ \ \ \  \ \ \ \ \ \ \ \ \ \ \ \ \ \ \ {\langle R\rangle}=\langle a_1bc,a_2bc \rangle. \ \ \ \ \ \ \ \ \ \ \ \ \ \ \ \ \ \ \ \ \ \ \ \ \ \ \ \ \ \  \ \ \ \ \  \ {\langle R'\rangle}=\langle a_1^*\rho_1^*+bc,a_2^*\rho_2^*+bc,a_1^*\rho_2^*,a_2^*\rho_1^*   \rangle.\ \ \ \ \ \  \ \ \ \ \ \ \ $$

\item A source vertex may lie on a 2-cycle of $Q_\Lambda$. 

$\xymatrix@C40pt@R20pt{
{1} \ar[rr]^{d} \ar[d]_{a}\ar@/_4mm/@{.}[rr] & &4 &\overset{}{\Longrightarrow}   \\
2 \ar[rr]_{b} &  &3  \ar[u]_{c }  } \ \ \ \ \ \ \ \ \ 
\xymatrix@C40pt@R20pt{
{1}  \ar@/_4mm/[rr]|{\rho^*} & &4  \ar@(ur,dr)@{.}[] \ar[ll]_{d^*}  \\
2 \ar[u]^{a^*} \ar@{.}[rru] \ar[rr]_{b} &  &3  \ar[u]_{c }  }$
$$  \ \ \ \ \ \ \ \ \ \ \  \ \ \ \ \ \ \ \ \ \ \ \ \ {\langle R\rangle}=\langle abc\rangle.   \ \ \  \ \ \ \ \ \ \  \ \ \ \ \ \  \ \ \ \ \ \ \ \ \ \  \ \ \ \ \ \ \ \ \  \ \ \ {\langle R'\rangle}=\langle a^*\rho^*+ bc,d^*\rho^*\rangle.\ \ \ \ \ \  \ \ \ \ \ \ \ $$

\end{enumerate}
As the examples show, we interpret degree 1 arrows as relations. 
We remark that these considerations also appear in the study of cluster tilted algebras. 
\end{exam}

\subsection{Proof of main theorem}

To give a proof of the theorem, we recall the following notation and result given by \cite{BIRSm}.  

Let $Q$ be a finite quiver. 
For $a\in Q_1$, define a \emph{right derivative} $\partial^r_a: J_{\widehat{KQ}}\to\widehat{KQ}$
and a \emph{left derivative} $\partial^l_a: J_{\widehat{KQ}}\to\widehat{KQ}$ by 
\begin{eqnarray*}
&\partial^r_a(a_1a_2\cdots a_{m-1}a_m)=
\left\{\begin{array}{cc}
a_1a_2\cdots a_{m-1}&\mbox{ if }\ a_m=a,\\
0&\mbox{ otherwise,}
\end{array}\right.&\\
&\partial^l_a(a_1a_2\cdots a_{m-1}a_m)=
\left\{\begin{array}{cc}
a_2\cdots a_{m-1}a_m&\mbox{ if }\ a_1=a,\\
0&\mbox{ otherwise,}
\end{array}\right.&
\end{eqnarray*}
and extend to $J_{\widehat{KQ}}$ linearly and continuously.
For the sake of simplicity of presentation, 
we write $Q(i,j):=\{a\in Q_1\ |\ s(a)=i,\ e(a)=j\}$ for a quiver $Q$ and $R(i,j):=\{ r\in R\ |\ s(r)=i,\ e(r)=j\}$ 
for a set of basic elements $R$. 

\begin{theorem}\label{birs2}\cite[Proposition 3.1, 3.3]{BIRSm}
Let $Q$ be a finite connected quiver and $\Gamma$ be a basic finite dimensional algebra.
Let $\phi: \widehat{KQ} \to \Gamma$ be an algebra homomorphism and 
$R$ be a finite set of basic elements in $J_{\widehat{KQ}}$. 
Then the following conditions are equivalent.

\begin{itemize}
\item[(1)]$\phi$ is surjective and $\Ker\phi=\overline{I}$ for the ideal $I=\langle R\rangle$ of ${\widehat{KQ}}$. 

\item[(2)]
The following sequence is exact for any $i\in Q_0$.
\[\xymatrix@C40pt{  {\displaystyle\bigoplus_{\begin{smallmatrix}r \in R(-,i) \end{smallmatrix}}}\Gamma(\phi s(r)) \ar[r]^{{}_r(\phi\partial^r_ar)_{a}}  & 
{\displaystyle\bigoplus_{\begin{smallmatrix}a \in Q(-,i)\end{smallmatrix}}} \Gamma(\phi s(a)) \ar[r]^{\ \  \ \  \ {}_a(\phi a)} &  J_\Gamma(\displaystyle{\phi i}) \ar[r]& 0.  }\]
\end{itemize}
\end{theorem}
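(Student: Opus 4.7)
The plan is to establish both directions of the equivalence by working with the $J_{\widehat{KQ}}$-adic filtration on $\widehat{KQ}$, together with the facts that $\Gamma$ is basic finite dimensional (so $\Gamma = \Gamma/J_\Gamma \oplus J_\Gamma$ with $J_\Gamma$ nilpotent) and that $\widehat{KQ}$ is complete in its $J_{\widehat{KQ}}$-adic topology.

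For $(1)\Rightarrow(2)$, I would fix $i\in Q_0$ and identify $\Gamma$ with $\widehat{KQ}/\overline{\langle R\rangle}$. Surjectivity of the rightmost map onto $J_\Gamma\phi(e_i)$ is clear: any element has a lift in $J_{\widehat{KQ}}e_i$, which is a $J$-adic limit of nontrivial paths ending at $i$, each factoring as (shorter path)$\,\cdot\, a$ for some $a\in Q(-,i)$. The composition of the two maps vanishes because $\sum_a\partial^r_a r\cdot a=r\in\overline{\langle R\rangle}=\Ker\phi$. For the nontrivial inclusion $\Ker\supseteq\Image$, I would lift a kernel tuple $(f_a)_a$ to $(\tilde f_a)_a$ in $\widehat{KQ}$ so that $\sum_a\tilde f_a\, a\in\overline{\langle R\rangle}$; expressing this as a $J$-adic limit of finite sums $\sum_\alpha x^\alpha r^\alpha y^\alpha$ with $r^\alpha\in R$ and grouping terms by the final arrow of each $y^\alpha$ exhibits $(f_a)_a$ as an image from the relations side.

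For $(2)\Rightarrow(1)$, surjectivity of $\phi$ is immediate: the rightmost maps being surjective forces the $\phi(a)$'s to generate $J_\Gamma$ as a left $\Gamma$-module, and together with the $\phi(e_i)$'s they span $\Gamma$. The containment $\overline{\langle R\rangle}\subseteq\Ker\phi$ comes from the vanishing of the composition, which forces $\phi(r)=0$. For the reverse, take $x\in\Ker\phi\cap\widehat{KQ}e_i$ and decompose $x=c\,e_i+\sum_a x_a\, a$; the scalar $c$ vanishes because the idempotent $\phi(e_i)$ is not in the nilpotent ideal $J_\Gamma$. Then $(\phi x_a)_a$ lies in the kernel of the middle map, and by exactness equals the image of some $(g_r)_r$; lifting to $\tilde g_r\in\widehat{KQ}$ and subtracting $\sum_r\tilde g_r\, r\in\langle R\rangle$ produces $x'\in\Ker\phi$ of strictly greater minimum filtration degree, and iterating together with completeness presents $x$ as a convergent series in $\overline{\langle R\rangle}$.

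The main obstacle is the filtration bookkeeping in the iterative step for $(2)\Rightarrow(1)$: the lifts $\tilde g_r$ must be chosen so that $\sum_r\tilde g_r\, r$ cancels the lowest-degree part of the current residue, forcing the next residue into a strictly higher power of $J_{\widehat{KQ}}$, so that the inductively defined series converges in the $J$-adic topology. The hypothesis $R\subset J_{\widehat{KQ}}$ (each $r$ has no constant term) is essential for this induction to close. The reverse direction requires analogous care to rearrange the $J$-adic limit $\sum_\alpha x^\alpha r^\alpha y^\alpha$ into the derivative form $(\sum_r g_r\cdot\phi\partial^r_a r)_a$ term by term.
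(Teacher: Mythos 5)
The paper does not prove this statement at all: it is quoted verbatim from \cite[Propositions 3.1 and 3.3]{BIRSm}, so there is no in-paper argument to compare yours against. Judged on its own terms, your outline follows the natural (and surely the intended) route: $J$-adic completeness of $\widehat{KQ}$ on one side, nilpotence of $J_\Gamma$ on the other, and the identity $r=\sum_a(\partial^r_ar)\,a$ tying the two maps together. The direction $(1)\Rightarrow(2)$ is essentially complete once you make explicit the one point you only gesture at: in a term $x^\alpha r^\alpha y^\alpha$ with $y^\alpha$ of positive length, say $y^\alpha=z^\alpha a$, the contribution to the $a$-component is $x^\alpha r^\alpha z^\alpha$, which still contains the whole relation and therefore dies under $\phi$; only the terms with $y^\alpha$ trivial survive, and those land exactly in the image of ${}_r(\phi\partial^r_ar)_{a}$. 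That observation is the entire content of exactness in the middle and needs to be stated, not subsumed under ``grouping terms by the final arrow''.

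The one step that does not close as written is the surjectivity of $\phi$ in $(2)\Rightarrow(1)$. Exactness gives $J_\Gamma\phi(e_i)=\sum_{a}\Gamma\phi(e_{s(a)})\phi(a)$, i.e.\ the $\phi(a)$ generate $J_\Gamma$ as a \emph{left $\Gamma$-module}; this is strictly weaker than $J_\Gamma\subseteq\Image\phi$, because the coefficients range over all of $\Gamma$ rather than over $\Image\phi$. You must iterate: writing $\Gamma=V\oplus J_\Gamma$ with $V$ spanned by a complete set of primitive orthogonal idempotents and using $J_\Gamma^N=0$, one obtains $J_\Gamma=\sum_{|p|\geq 1}V\phi(p)$, and to conclude that $V\phi(p)\subseteq\Image\phi$ you need to know that the $\phi(e_i)$ themselves form a complete set of primitive orthogonal idempotents of $\Gamma$. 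This is where ``basic'' actually enters, and it is an implicit hypothesis of the quoted statement: without it, the diagonal embedding $K\to K\times K$ with $Q$ a single vertex and $R=\emptyset$ satisfies (2) but not (1). Make that hypothesis and the induction on powers of $J_\Gamma$ explicit, and your argument is complete.
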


%As usual, we call the above sequence \emph{projective presentation of $J_\Gamma(\displaystyle{\phi i})$}.
%Note that, if $KQ/\langle\mathbf{S}\rangle$ is finite dimensional, we can deal with path algebras $KQ$ instead of $\widehat{KQ}$ and the corresponding statement is true.

First, we give the following observation to obtain some sequences, which play an  important role for the proof. 
We keep the assumption and the notation of Theorem \ref{main}.

\begin{observ}\label{Observ}
For the isomorphism $\phi:\widehat{KQ}/\overline{{\langle R\rangle}}\to\Lambda$, we simply denote $\phi(p)$ by $p$ for any morphism $p$ in $\widehat{KQ}$. 
We have the following complex
\[\xymatrix@C40pt{ 0 \ar[r]^{}  & P_k \ar[r]^{\begin{array}{ccc}\aaa \ \ \ \ \ \end{array}} &{\displaystyle\bigoplus_{\begin{smallmatrix}a\in Q(k,-)\end{smallmatrix}}}P_{e(a)}  \ar[r]^{ \begin{array}{ccc}\partial^l\rr\end{array}} & {\displaystyle\bigoplus_{\begin{smallmatrix}r\in R(k,-)\end{smallmatrix}}}P_{e(r)},     }\]
where we denote by $\aaa:=(a)_a$ and $\partial^l\rr:={}_{a}(\partial_{a}^lr)_{r}$. 

On the other hand, since $P_k$ is an indecomposable projective non-injective module, there exists the following almost split sequence
\[\tag{$i$}\label{al seq}
\xymatrix@C40pt@R20pt{ 0 \ar[r]^{}  & P_k \ar[r]^{\begin{array}{ccc}\aaa \ \ \ \ \ \ \end{array}}  &{\displaystyle\bigoplus_{\begin{smallmatrix}a \in Q(k,-)\end{smallmatrix}}}P_{e(a)}  \ar[r]^{\ \ \begin{array}{ccc}\ \ \g \end{array}  } & \tau^-P_k \ar[r]&0,     }
\] where $\g:={}_a(g_a)$ is defined by the above sequence.
 
Furthermore, for any simple $\Lambda$-module $S_i$ with $i\in Q_0$, we obtain a minimal projective resolution of $S_i$ 
\[\tag{$ii$}\label{proj resol}
\xymatrix@C30pt@R20pt{
\ar[r]& Q \ar[r]^{\begin{array}{ccc} \LL \ \ \ \ \ \  \end{array}} & {\displaystyle\bigoplus_{\begin{smallmatrix}r \in R(-,i) \end{smallmatrix}}} P_{s(r)} \ar[r]^{\begin{array}{ccc} \partial^r\rr \end{array}}  &\ar[r]^{
\begin{array}{ccc} \ \ \ \ \ \bb    \end{array}}  {\displaystyle\bigoplus_{\begin{smallmatrix}b \in Q(-,i) \end{smallmatrix}}} P_{s(b)}  & P_{i}  \ar[r]&S_i\ar[r]&0, \ \ \ }
\]
where we denote by $\bb:={}_b(b)$, $\partial^r\rr:={}_r(\partial_b^rr)_b$ and $\LL$ is defined by the above sequence.

If $P_k$ is a direct summand of ${\displaystyle\bigoplus_{\begin{smallmatrix}r \in R(-,i) \end{smallmatrix}}} P_{s(r)}$, 
we can take the canonical injection 
$e:P_k\to {\displaystyle\bigoplus_{\begin{smallmatrix}r \in R(-,i) \end{smallmatrix}}} P_{s(r)}$.
%For $r\in R(k,i)$, we have the canonical injection $e:P_k\to {\displaystyle\bigoplus_{\begin{smallmatrix}r \in R(-,i) \end{smallmatrix}}} P_{s(r)}$.
Then, by (\ref{al seq}), there exists $\partial^l\partial^r\rr:{\displaystyle\bigoplus_{\begin{smallmatrix}a \in Q(k,-)\end{smallmatrix}}}P_{e(a)} \to{\displaystyle\bigoplus_{\begin{smallmatrix}b \in Q(-,i) \end{smallmatrix}}} P_{s(b)}$ such that 
$e(\partial^r\rr)=\aaa(\partial^l\partial^r\rr)$, where we denote $\partial^l\partial^r\rr:={}_{r,a}(\partial_a^l\partial_b^rr)_{b}$. 
It induces a morphism $f_r:\tau^-P_k\to P_i$ such that $(\partial^l\partial^r\rr)\bb=\g f_r$.

\[
\xymatrix@C50pt@R40pt{
Q \ar[r]^{\begin{array}{ccc} \LL\end{array}\ \ \ }  & {\displaystyle\bigoplus_{\begin{smallmatrix}r \in R(-,i)\end{smallmatrix}}} P_{s(r)} \ar[r]^{\begin{array}{ccc}\partial^r\rr \end{array}}  &\ar[r]^{
\begin{array}{ccc}\ \ \bb \end{array}}  {\displaystyle\bigoplus_{\begin{smallmatrix}b \in Q(-,i) \end{smallmatrix}}} P_{s(b)}
    & P_{i}  \\
0\ar[r] & P_{k} \ar[r]^{\begin{array}{ccc} \aaa\ \  \end{array}} \ar@{^{(}->}[u]^{\begin{array}{ccc}\ \ \ \ e \end{array}} & {\displaystyle\bigoplus_{\begin{smallmatrix}a \in Q(k,-)\end{smallmatrix}}} P_{e(a)} \ar[r]^{\begin{array}{ccc}\ \ \ \g\end{array}} \ar@{.>}[u]^{\begin{array}{ccc}\partial^l\partial^r\rr \end{array}}&  \tau^-P_{k} \ar@{.>}[u]^{\begin{array}{ccc}f_r \end{array}} \ar[r]&0.   }
\] 
Then, by taking a copy of (\ref{al seq}) for every $r\in R(k,i)$, 
we have the following commutative diagram
\[
\xymatrix@C50pt@R40pt{
Q \ar[r]^{\begin{array}{ccc} \LL\end{array}\ \ \ } & {\displaystyle\bigoplus_{\begin{smallmatrix}r \in R(-,i)\end{smallmatrix}}} P_{s(r)} \ar[r]^{\begin{array}{ccc}\partial^r\rr \end{array}}  &\ar[r]^{
\begin{array}{ccc}\ \ \bb \end{array}}  {\displaystyle\bigoplus_{\begin{smallmatrix}b \in Q(-,i) \end{smallmatrix}}} P_{s(b)}
    & P_{i}  \\
0 \ar[r] & {\displaystyle\bigoplus_{\begin{smallmatrix}r \in R(k,i) \end{smallmatrix}}} P_{k} \ar[r]^{\begin{array}{ccc}\oplus \aaa\ \  \end{array}} \ar@{^{(}->}[u]^{\begin{array}{ccc}\ \ \ \ \mathbf{e} \end{array}} & {\displaystyle\bigoplus_{\begin{smallmatrix}a \in Q(k,-), r\in R(k,i) \end{smallmatrix}}} P_{e(a)} \ar[r]^{\begin{array}{ccc}\ \ \  \oplus\g\end{array}} \ar[u]^{\begin{array}{ccc}\partial^l\partial^r\rr \end{array}}&  
{\displaystyle\bigoplus_{\begin{smallmatrix}r\in R(k,i) \end{smallmatrix}}} \tau^-P_{k} \ar[u]^{\begin{array}{ccc}\hh \end{array}} \ar[r]&0,   }
\] 
where we denote by $\mathbf{e}$ the canonical inclusion and   
$\oplus\aaa :=\left(\begin{smallmatrix}\aaa&&0\\ &\ddots&  \\0&&\aaa\end{smallmatrix}\right)$, 
$ \oplus\g :=\left(\begin{smallmatrix} \g&&0\\ &\ddots&  \\0&& \g\end{smallmatrix}\right)$ and $\hh:={}_r(f_r)$. %is defined by the above commutative diagram.
\end{observ}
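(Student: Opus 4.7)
The observation is a bundle of constructions: an initial two-step complex, an almost split sequence, a minimal projective resolution, and two successive lifts witnessing a diagram chase. My plan is to verify each piece in turn, in the order they are stated. Everything is driven by two Leibniz-type identities,
$r=\sum_{a\in Q(k,-)} a\cdot\partial_a^l r$ and $\partial_b^r r=\sum_{a\in Q(k,-)} a\cdot\partial_a^l\partial_b^r r$, valid for any $r\in R(k,-)$, together with Theorem \ref{birs2}.

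First, for the initial complex I would compute the $r$-component of $\aaa\cdot\partial^l\rr$: it equals $\sum_{a\in Q(k,-)} a\cdot\partial_a^l r=r$, which is zero in $\Lambda=\widehat{KQ}/\overline{\langle R\rangle}$. Next, for the almost split sequence (\ref{al seq}), since $k$ is a source the module $P_k=S_k$ is simple projective, and by hypothesis it is non-injective, so the almost split sequence starting at $P_k$ exists. To identify its middle term with $\bigoplus_{a\in Q(k,-)} P_{e(a)}$ and the left map with $\aaa$, I would invoke the description of the minimal left $\add(\Lambda/P_k)$-approximation of $P_k$ from the tilting mutation $\nu_k^L$ recalled in Section 2.3, whose cokernel is exactly $\tau^-P_k$.

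Second, the minimal projective resolution (\ref{proj resol}) of $S_i$ is the classical one obtained from the presentation $\Lambda=\widehat{KQ}/\overline{\langle R\rangle}$: the map $\bb={}_b(b)$ is a projective cover of $J_\Lambda(\phi i)$, and Theorem \ref{birs2} identifies its kernel as the image of $\partial^r\rr={}_r(\partial_b^r r)_b$; minimality of $R$ ensures minimality of the resolution. When $R(k,i)\neq\emptyset$, the summand indexed by $R(k,i)$ yields the canonical inclusion $e:P_k\hookrightarrow\bigoplus_{r\in R(-,i)}P_{s(r)}$. Applying the second Leibniz identity to each $\partial_b^r r$ with $r\in R(k,i)$ gives $\partial_b^r r=\sum_a a\cdot\partial_a^l\partial_b^r r$, which directly exhibits the lift $\partial^l\partial^r\rr={}_{r,a}(\partial_a^l\partial_b^r r)_b$ satisfying $e\cdot\partial^r\rr=\aaa\cdot(\partial^l\partial^r\rr)$.

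Third, to produce $f_r$ I would post-compose the previous equality with $\bb$ and use that $\partial^r\rr\cdot\bb=0$ in $\Lambda$ (its $r$-component is $\sum_b \partial_b^r r\cdot b=r=0$). This yields $\aaa\cdot(\partial^l\partial^r\rr)\cdot\bb=e\cdot\partial^r\rr\cdot\bb=0$, so by exactness of (\ref{al seq}) the morphism $(\partial^l\partial^r\rr)\cdot\bb$ factors uniquely through $\g$, producing the required $f_r:\tau^-P_k\to P_i$. The final total commutative diagram is then assembled by taking one copy of (\ref{al seq}) for each $r\in R(k,i)$ and combining the maps $\partial^l\partial^r\rr$ and $f_r$ into block form with the block-diagonal matrices $\oplus\aaa$ and $\oplus\g$. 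The main conceptual obstacle is the identification of the middle term and left-hand map of (\ref{al seq}) as $\bigoplus_{a\in Q(k,-)}P_{e(a)}$ and $\aaa$; once this is granted, the remaining verifications are formal consequences of the two Leibniz identities and of Theorem \ref{birs2}.
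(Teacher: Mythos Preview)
Your proposal is correct and follows the same route as the paper. The Observation in the paper is stated with only brief inline justifications (``by (\ref{al seq})'', ``it induces a morphism $f_r$''), and you have simply filled in the details: the Leibniz identities $r=\sum_a a\cdot\partial_a^l r$ and $\partial_b^r r=\sum_a a\cdot\partial_a^l\partial_b^r r$ for $r\in R(k,-)$ together with Theorem~\ref{birs2} are exactly what underlie the paper's explicit matrix $\partial^l\partial^r\rr={}_{r,a}(\partial_a^l\partial_b^r r)_b$, and your factorization argument for $f_r$ via $\aaa\cdot(\partial^l\partial^r\rr)\cdot\bb=e\cdot(\partial^r\rr)\cdot\bb=0$ is the intended one. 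The only cosmetic difference is your justification of the middle term of (\ref{al seq}) via Section~2.3; the more direct argument is that for a simple projective $P_k$ the irreducible maps out of $P_k$ are precisely the left multiplications by arrows $a\in Q(k,-)$, which immediately identifies the source map of the almost split sequence with $\aaa$.
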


Next, we give the following lemmas. 
%We keep the notation of Theorem \ref{main} and Observation \ref{Observ}. 
In the rest of this section, we put $T:=T_k$ for simplicity.

\begin{lemm}\label{lift}
Let $h:T\to {\displaystyle\bigoplus_{\begin{smallmatrix}b \in Q(-,i) \end{smallmatrix}}} P_{s(b)}$ be a morphism.
If $h\bb=0$, then there exists $j:T\to{\displaystyle\bigoplus_{\begin{smallmatrix}r \in R(-,i)\end{smallmatrix}}} P_{s(r)}$ such that $j(\partial^r\rr)=h$.
\end{lemm}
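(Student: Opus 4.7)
The plan is to decompose $T=\tau^-P_k\oplus\Lambda/P_k$ and lift each summand separately. Since $h\bb=0$, the image of $h$ lies in $\Ker\bb=\Image\partial^r\rr$ by exactness of (\ref{proj resol}). For the projective summand $\Lambda/P_k$, the restriction $h|_{\Lambda/P_k}$ lifts through the surjection $\partial^r\rr:\bigoplus_{r\in R(-,i)} P_{s(r)}\twoheadrightarrow\Image\partial^r\rr$ immediately by projectivity.

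For $\tau^-P_k$, compose $h|_{\tau^-P_k}$ with $\g$ from the almost split sequence (\ref{al seq}) to obtain $\g\,h|_{\tau^-P_k}:\bigoplus_{a\in Q(k,-)}P_{e(a)}\to\bigoplus_{b\in Q(-,i)}P_{s(b)}$; this still satisfies $(\g\,h|_{\tau^-P_k})\bb=0$, so projectivity of $\bigoplus_{a}P_{e(a)}$ supplies $\tilde j:\bigoplus_{a}P_{e(a)}\to\bigoplus_{r}P_{s(r)}$ with $\tilde j\,\partial^r\rr=\g\,h|_{\tau^-P_k}$. Then $\aaa\tilde j:P_k\to\bigoplus_{r}P_{s(r)}$ satisfies $(\aaa\tilde j)\,\partial^r\rr=\aaa\g\,h|_{\tau^-P_k}=0$ by exactness of (\ref{al seq}), so $\aaa\tilde j$ factors through $\Ker\partial^r\rr$. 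The crucial step is to show this morphism $P_k\to\Ker\partial^r\rr$ extends along $\aaa$ to some $g:\bigoplus_{a}P_{e(a)}\to\Ker\partial^r\rr$; equivalently, that its class in $\Ext^1(\tau^-P_k,\Ker\partial^r\rr)=\Coker(\aaa^*)$ vanishes. This is where the hypothesis $\id P_k\le 2$ enters: applying Auslander-Reiten duality one has
\[\Ext^1(\tau^-P_k,\Ker\partial^r\rr)\;\cong\;D\overline{\Hom}(\Ker\partial^r\rr,P_k),\]
and under the syzygy identification $\Ker\partial^r\rr\cong\Omega^2 S_i$ supplied by (\ref{proj resol}), this is governed by $\Ext^3(S_i,P_k)=0$, which is equivalent to $\id P_k\le 2$. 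Establishing this Ext-vanishing is the principal obstacle I foresee.

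Granted such a $g$, set $j':=\tilde j-g$ (with $g$ viewed as a map into $\bigoplus_{r}P_{s(r)}$): one has $\aaa j'=0$ and $j'\,\partial^r\rr=\g\,h|_{\tau^-P_k}$. The vanishing $\aaa j'=0$ factorises $j'=\g\,j''$ for some $j'':\tau^-P_k\to\bigoplus_{r}P_{s(r)}$, and since $\g$ is an epimorphism the identity $\g(j''\,\partial^r\rr)=\g\,h|_{\tau^-P_k}$ upgrades to $j''\,\partial^r\rr=h|_{\tau^-P_k}$. Assembling $j''$ on $\tau^-P_k$ with the projective lift on $\Lambda/P_k$ produces the required $j:T\to\bigoplus_{r}P_{s(r)}$ with $j\,\partial^r\rr=h$.
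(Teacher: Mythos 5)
Your overall strategy is sound and, after unwinding, hinges on exactly the same obstruction as the paper: lifting over the $\tau^-P_k$ summand is blocked precisely by $\Ext^1_\Lambda(\tau^-P_k,\Ker(\partial^r\rr))$, and since the resolution (\ref{proj resol}) is exact, $\Ker(\partial^r\rr)=\Im\LL$, so this is the group the paper kills (the paper argues more directly, applying $\Hom_\Lambda(T,-)$ to $0\to\Im\LL\to\bigoplus_{r\in R(-,i)}P_{s(r)}\to\Im(\partial^r\rr)\to0$). However, there is a genuine gap at the step you yourself flag: you never prove the vanishing, you only assert that $D\overline{\Hom}_\Lambda(\Ker(\partial^r\rr),P_k)$ is ``governed by'' $\Ext^3_\Lambda(S_i,P_k)=0$. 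This is not a formal dimension-shift: Auslander--Reiten duality produces the injectively stable $\overline{\Hom}$, while syzygy-shifting $\Ext^3(S_i,P_k)$ produces the cokernel of $\Hom_\Lambda(Q,P_k)\to\Hom_\Lambda(\Im\LL,P_k)$ coming from the resolution (and note that the projectively stable group $\underline{\Hom}(-,P_k)$ is identically zero because $P_k$ is projective, so no naive identification $\Ext^3(S_i,P_k)\cong\underline{\Hom}(\Omega^3S_i,P_k)$ is available). Also, with the minimal resolution (\ref{proj resol}) one has $\Ker(\partial^r\rr)\cong\Omega^3S_i$, not $\Omega^2S_i$ as you write, though your appeal to $\Ext^3$ shows you intended the right degree.

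What is actually needed, and what the paper supplies, is the following chain using the simplicity and projectivity of $P_k$ together with minimality of (\ref{proj resol}): since $P_k$ is simple and the differentials land in radicals, the induced maps in $\Hom_\Lambda(-,P_k)$ vanish, so $\Ext^3_\Lambda(S_i,P_k)\cong\Hom_\Lambda(Q,P_k)$; the hypothesis $\id P_k\le2$ then gives $\Hom_\Lambda(Q,P_k)=0$, so $P_k$ is not a direct summand of $Q$. Because every nonzero morphism into the simple projective $P_k$ is a split epimorphism, and $Q$ surjects onto $\Im\LL=\Ker(\partial^r\rr)$, it follows that $P_k$ is not a summand of $\Im\LL$ and hence $\Hom_\Lambda(\Im\LL,P_k)=0$. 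Only now does AR duality, $\Ext^1_\Lambda(T,\Im\LL)\cong D\overline{\Hom}_\Lambda(\Im\LL,\tau T)=D\overline{\Hom}_\Lambda(\Im\LL,P_k)$, kill the obstruction. Once this vanishing is in place, your remaining manipulations (lifting $\g\,h|_{\tau^-P_k}$ by projectivity, correcting by $g$, descending along the epimorphism $\g$, and assembling with the trivial lift on the projective summand $\Lambda/P_k$) are correct, if more roundabout than the paper's argument.
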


\begin{proof}
By the sequence (\ref{proj resol}), we have the following exact sequence
$$0\to\Im \LL\to{\displaystyle\bigoplus_{\begin{smallmatrix}r \in R(-,i)\end{smallmatrix}}} P_{s(r)}\to\Im 
(\partial^r\rr)\to0.$$
Applying the functor $\Hom_\Lambda(T,-)$ to the sequence, 
we have the following exact sequence 
$$\Hom_\Lambda(T,{\displaystyle\bigoplus_{\begin{smallmatrix}r \in R(-,i)\end{smallmatrix}}} P_{s(r)})  \longrightarrow  \Hom_\Lambda(T, \Im(\partial^r\rr)) \longrightarrow \Ext_\Lambda^1(T,\Im \LL).$$
Hence, it is enough to show that $\Ext_\Lambda^1(T,\Im \LL)=0.$

Note that we have $\Ext_\Lambda^3(S_i,P_k)\cong\Hom_\Lambda(Q,P_k)$ since $P_k$ is a simple module. 
Then, by $\id P_k\leq 2$, we have $\Hom_\Lambda(Q,P_k)=0$. 
Therefore, $P_k$ is not a direct summand of $Q$. 
Since $P_k$ is a simple projective module, we have $\Hom_\Lambda(X,P_k)=0$ for any $X\in\mod\Lambda$ if $X\ncong P_k$. 
Therefore, if $P_k$ is a direct summand of $\Im \LL$, then it contradicts the fact that $Q\to\Im \LL$ is an epimorphism.
Thus, $P_k$ is not a direct summand of $\Im \LL.$ 
Hence, by the AR duality, we have $\Ext_\Lambda^1(T,\Im \LL)\cong D\overline{\Hom}_\Lambda(\Im \LL,P_k)=0$. %since $P_k$ is a simple projective module. 
\end{proof}

\begin{lemm}\label{exact}
The following sequence is exact.

\[\tag{$iii$}\label{exact seq}\xymatrix@C80pt{ {\displaystyle
\begin{array}{rr}
{\displaystyle\bigoplus_{\begin{smallmatrix}r \in R(-,i), r\notin R(k,-)
 \end{smallmatrix}}} P_{s(r)}\     \\
{\displaystyle\oplus\bigoplus_{\begin{smallmatrix}a \in Q(k,-),r\in R(k,i) \end{smallmatrix}} P_{e(a)}}
\end{array}}
   \ar[r]^{\ \ \ \ \left(
\begin{array}{ccc}
 \partial^r\rr & 0  \\
\partial^l\partial^r\rr & {\oplus\g} \end{array}
\right)
 } &{\displaystyle
\begin{array}{rr}
{\displaystyle\bigoplus_{\begin{smallmatrix}b \in Q(-,i) \end{smallmatrix}}} P_{s(b)}\ \ \\
{\displaystyle\oplus\bigoplus_{\begin{smallmatrix}r\in R(k,i)\end{smallmatrix}} \tau^-P_{k}}
\end{array}} 
 \ar[r]^{\ \ \ \ \  \ \ \left(\begin{array}{ccc} \bb  \\ -\hh
\end{array}
\right)
} & P_i.     }\]
\end{lemm}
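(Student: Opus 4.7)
The plan is to show that the composition of the two maps vanishes and then establish exactness at the middle term by a short diagram chase. The vanishing follows at once: the first block gives $(\partial^r\rr)\bb=0$ by the resolution-complex identity in (\ref{proj resol}), while the right-hand square of the second commutative diagram in Observation \ref{Observ} gives $(\partial^l\partial^r\rr)\bb=(\oplus\g)\hh$, so the product of the two matrices is zero.

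For exactness at the middle term, suppose $(u,v)$ with $u\bb=v\hh$. I would proceed in three steps. First, the almost split sequence (\ref{al seq}) makes $\g$ an epimorphism, hence $\oplus\g$ is surjective, so there exists $y_{0}\in\bigoplus_{a\in Q(k,-),\,r\in R(k,i)}P_{e(a)}$ with $y_{0}(\oplus\g)=v$. Second, set $w:=u-y_{0}(\partial^{l}\partial^{r}\rr)$; the relation $(\partial^{l}\partial^{r}\rr)\bb=(\oplus\g)\hh$ together with $u\bb=v\hh$ gives $w\bb=0$, so by exactness of the projective resolution (\ref{proj resol}) at $\bigoplus_{b\in Q(-,i)}P_{s(b)}$ there is some $x_{0}\in\bigoplus_{r\in R(-,i)}P_{s(r)}$ with $x_{0}(\partial^{r}\rr)=w$.

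The third step is to correct $(x_{0},y_{0})$ so that its first coordinate lies in the prescribed summand. Split $\bigoplus_{r\in R(-,i)}P_{s(r)}=\bigoplus_{r\in R(-,i),\,r\notin R(k,-)}P_{s(r)}\oplus\bigoplus_{r\in R(k,i)}P_{k}$ and write $x_{0}=x+x''\mathbf{e}$ accordingly, where $\mathbf{e}$ is the canonical inclusion appearing in Observation \ref{Observ}. Put $y:=y_{0}+x''(\oplus\aaa)$, which lies in $\bigoplus_{a\in Q(k,-),\,r\in R(k,i)}P_{e(a)}$ by the block-diagonal form of $\oplus\aaa$. The left-hand commutative square $\mathbf{e}(\partial^{r}\rr)=(\oplus\aaa)(\partial^{l}\partial^{r}\rr)$ from Observation \ref{Observ} then yields $x(\partial^{r}\rr)+y(\partial^{l}\partial^{r}\rr)=x_{0}(\partial^{r}\rr)+y_{0}(\partial^{l}\partial^{r}\rr)=u$, while $\aaa\g=0$ from (\ref{al seq}) gives $y(\oplus\g)=v$. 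Hence $(x,y)$ is the desired preimage.

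The main obstacle is the bookkeeping across the multi-indexed direct sums, in particular checking that the corrective term $x''(\oplus\aaa)$ automatically lands in the right summand and that both commutative squares of Observation \ref{Observ} are applied in the correct direction. Beyond this, no further homological input is required: the epimorphism $\g$, the middle-term exactness of the minimal projective resolution, and the two commutativity identities provided by Observation \ref{Observ} are all that enter the argument.
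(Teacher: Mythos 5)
Your proof is correct and is essentially the paper's argument made explicit: the paper obtains the sequence as the mapping cone of the last commutative diagram in Observation \ref{Observ} followed by cancellation of the contractible summand $\bigoplus_{r\in R(k,i)}P_k$, and your three-step chase (lift along $\oplus\g$, correct using exactness of (\ref{proj resol}) at $\bigoplus_b P_{s(b)}$, then shift the $\bigoplus_{r\in R(k,i)}P_k$-component across the square $\mathbf{e}(\partial^r\rr)=(\oplus\aaa)(\partial^l\partial^r\rr)$) is precisely the element-level unpacking of that cone-and-cancel construction. No gaps; if anything, your version supplies the details the paper's one-line proof leaves to the reader.
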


\begin{proof}

By taking the mapping cone of the last commutative diagram of Observation \ref{Observ} and canceling a direct summand ${\displaystyle\bigoplus_{\begin{smallmatrix}r \in R(k,i) \end{smallmatrix}}} P_{k}$, we obtain the desired sequence. 

\end{proof}

\begin{lemm}\label{radical}
Assume that $\begin{pmatrix}p_2& p_1\end{pmatrix}\in\Hom_\Lambda(T,{\displaystyle\bigoplus_{\begin{smallmatrix}b \in Q(-,i) \end{smallmatrix}}} P_{s(b)}\ \ \oplus {\displaystyle\bigoplus_{\begin{smallmatrix}r\in R(k,i)\end{smallmatrix}}} \tau^-P_k )$ satisfies $\begin{pmatrix}p_2& p_1\end{pmatrix}\begin{pmatrix} \bb  \\-\hh \end{pmatrix}=0$.
Then we have $p_1\in \rad_\Lambda(T,{\displaystyle\bigoplus_{\begin{smallmatrix}r\in R(k,i)\end{smallmatrix}}} \tau^-P_k )$.
\end{lemm}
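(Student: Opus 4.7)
The plan is to propagate the constraint $p_2\bb=p_1\hh$ through the almost split sequence (\ref{al seq}) on the left and through the minimal projective resolution (\ref{proj resol}) on the right, exploiting that $k$ is a source and that $P_k$ is simple. First, decompose $T=\tau^-P_k\oplus\Lambda/P_k$ and correspondingly $p_1=(p_1^\tau,p_1^{\Lambda/P})$. Since $\tau^-P_k$ is a non-projective indecomposable whereas $\Lambda/P_k$ is a direct sum of indecomposable projectives, no summand of $\Lambda/P_k$ is isomorphic to $\tau^-P_k$, so $p_1^{\Lambda/P}$ automatically lies in $\rad$. It remains to show, for each $r\in R(k,i)$, that the component $\alpha_r:=(p_1^\tau)^{(r)}\colon\tau^-P_k\to\tau^-P_k$ is not invertible; since $\End(\tau^-P_k)$ is local and $\g$ is the right almost split morphism onto $\tau^-P_k$, this is equivalent to $\alpha_r$ factoring through $\g$.

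Next, by projectivity of $\bigoplus_a P_{e(a)}$ lift each $\alpha_r$ across $\g$ to $\tilde\alpha_r\colon\bigoplus_a P_{e(a)}\to\bigoplus_a P_{e(a)}$ with $\tilde\alpha_r\g=\g\alpha_r$. Composing the equation $p_2^\tau\bb=\sum_r\alpha_r f_r$ on the left with $\g$ and invoking $\g f_r=(\partial^l\partial^r r)\bb$ yields $\bigl[\sum_r\tilde\alpha_r(\partial^l\partial^r r)-\g p_2^\tau\bigr]\bb=0$. A second projective lift using $\Ker\bb=\Im\partial^r\rr$ produces $\delta\colon\bigoplus_a P_{e(a)}\to\bigoplus_{r'\in R(-,i)}P_{s(r')}$ with $\delta(\partial^r\rr)=\sum_r\tilde\alpha_r(\partial^l\partial^r r)-\g p_2^\tau$. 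Because $k$ is a source without loops, $\Hom(\bigoplus_a P_{e(a)},P_k)=0$, and hence $\delta^{(r')}=0$ whenever $r'\in R(k,i)$.

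Now precompose the previous relation with $\aaa$. The term $\g p_2^\tau$ dies since $\aaa\g=0$; and since $\aaa\tilde\alpha_r\g=\aaa\g\alpha_r=0$, each $\aaa\tilde\alpha_r$ factors through $\Ker\g=\Im\aaa\cong P_k$, producing scalars $\beta_r\in\End(P_k)=K$ with $\aaa\tilde\alpha_r=\beta_r\aaa$. Combined with the path identity $\aaa(\partial^l\partial^r r)=\partial^r r$ (which is just $\sum_a a\,\partial^l_a=\mathrm{id}$ on paths starting at $k$), the relation becomes $\bigl[\sum_r\beta_r\iota_r-\aaa\delta\bigr](\partial^r\rr)=0$, where $\iota_r\colon P_k\hookrightarrow\bigoplus_{r'}P_{s(r')}$ is the canonical inclusion at $r\in R(k,i)$. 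A third projective lift using $\Ker\partial^r\rr=\Im\LL$ yields $\epsilon\colon P_k\to Q$ with $\epsilon\LL=\sum_r\beta_r\iota_r-\aaa\delta$.

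Finally, for each fixed $r_0\in R(k,i)$ the $r_0$-component of this equation reads $\beta_{r_0}\cdot\mathrm{id}_{P_k}=\epsilon\LL^{(r_0)}$ (using $\delta^{(r_0)}=0$). Minimality of (\ref{proj resol}) gives $\Im\LL\subset\rad(\bigoplus_{r'}P_{s(r')})$, so $\LL^{(r_0)}$ lands in $\rad P_k=0$ by simplicity of $P_k$; hence $\LL^{(r_0)}=0$ and $\beta_{r_0}=0$. This forces $\aaa\tilde\alpha_{r_0}=0$, so $\tilde\alpha_{r_0}$ factors as $\g\phi_{r_0}$ for some $\phi_{r_0}\colon\tau^-P_k\to\bigoplus_a P_{e(a)}$; cancelling the left $\g$ in $\g\phi_{r_0}\g=\tilde\alpha_{r_0}\g=\g\alpha_{r_0}$ (using that $\g$ is an epimorphism) yields $\alpha_{r_0}=\phi_{r_0}\g\in\rad\End(\tau^-P_k)$, as desired. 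The main obstacle is running this three-layer lifting cleanly and keeping track of the components; the key insight that makes the chase close is that the source condition at $k$ systematically annihilates every obstruction living in a $P_k$-slot, via both $\Hom(P_{e(a)},P_k)=0$ and $\rad P_k=0$.
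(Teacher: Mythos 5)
Your argument is correct and is essentially the paper's own proof of this lemma: your lifts $\delta$ and $\epsilon$ play exactly the roles of the morphisms $v$ and $u$ in the paper's chase through the diagrams of Observation \ref{Observ}, with exactness of (\ref{proj resol}), the almost split sequence (\ref{al seq}), minimality of the resolution, and simplicity of $P_k$ supplying the final vanishing. The only difference is presentational: you run the chase directly and componentwise, extracting the scalars $\beta_r$ and showing each $\alpha_r$ factors through $\g$, whereas the paper argues by contradiction, assuming the component $p_{1k}$ is a split monomorphism and contradicting the fact that $u\LL+\aaa v$ is a radical morphism.
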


\begin{proof}

%First we will show that $p_1\in \rad_\Lambda(T,{\displaystyle\bigoplus_{\begin{smallmatrix}r\in R(k,i)\end{smallmatrix}}} \tau^-P_k )$. 
Let $p_{1k}:\tau^-P_k\to{\displaystyle\bigoplus_{\begin{smallmatrix}r\in R(k,i)
\end{smallmatrix}}} \tau^-P_k$ be a restriction of $p_1$ 
and $p_{2k}:\tau^-P_k\to{\displaystyle\bigoplus_{\begin{smallmatrix}b\in Q(-,i)
\end{smallmatrix}}} P_{s(b)}$ be a restriction of $p_2$. 
Assume that $p_{1k}$ is a split monomorphism.  
Then there exist split monomorphisms $e:P_k\to {\displaystyle\bigoplus_{\begin{smallmatrix}r \in R(k,i)\end{smallmatrix}}} P_k$ and $e':{\displaystyle\bigoplus_{\begin{smallmatrix}a \in Q(k,-) \end{smallmatrix}}} P_{e(a)}\to{\displaystyle\bigoplus_{\begin{smallmatrix}a \in Q(k,-),r\in R(k,i) 
\end{smallmatrix}}} P_{e(a)}$, which make the diagram below commutative. 
Since $p_{2k}\bb- p_{1k}\hh=0$, 
we have 
$(e'(\partial^l\partial^r\rr)+\g p_{2k})\bb= 
e'(\partial^l\partial^r\rr)\bb-\g p_{1k}\hh=0.$
It implies that 
there exists $v:{\displaystyle\bigoplus_{\begin{smallmatrix}a \in Q(k,-)
\end{smallmatrix}}} P_{e(a)}\to{\displaystyle\bigoplus_{\begin{smallmatrix}r \in R(-,i)
\end{smallmatrix}}} P_{s(r)}$ such that 
$v(\partial^r\rr)=e'(\partial^l\partial^r\rr)+\g p_{2k}.$

Furthermore, since we have $(e\mathbf{e}-\aaa v)(\partial^r\rr)=
e\mathbf{e}(\partial^r\rr)-\aaa e'(\partial^l\partial^r\rr)=0,$
there exists $u:P_k\to Q$ such that $u\LL=e\mathbf{e}-\aaa v.$
Because $e\mathbf{e}$ is a split monomorphism, 
this contradicts the fact that $u\LL+\aaa v$ is not a split monomorphism.

\[\xymatrix@C50pt@R30pt{
Q \ar[r]^{\begin{array}{ccc}\LL\ \ \  \end{array}} & {\displaystyle\bigoplus_{\begin{smallmatrix}r \in R(-,i)\\
\end{smallmatrix}}} P_{s(r)} \ar[r]^{\begin{array}{ccc}\partial^r\rr\ \ \  \end{array}}  &\ar[r]^{
\begin{array}{ccc}\ \ \ \bb \end{array}}  {\displaystyle\bigoplus_{\begin{smallmatrix}b \in Q(-,i) \end{smallmatrix}}} P_{s(b)}
    & P_{i}  \\
0 \ar[r] & {\displaystyle\bigoplus_{\begin{smallmatrix}r \in R(k,i)\end{smallmatrix}}} P_{k} \ar[r]^{\begin{array}{ccc}\oplus \aaa \ \ \  \end{array}} \ar@{^{(}->}[u]^{\begin{array}{ccc}\ \ \ \ \mathbf{e} \end{array}} & {\displaystyle\bigoplus_{\begin{smallmatrix}a \in Q(k,-),r\in R(k,i)
 \end{smallmatrix}}} P_{e(a)} \ar[r]^{\begin{array}{ccc}\ \ \ \ \oplus\g \end{array}} \ar[u]^{\begin{array}{ccc}\partial^l\partial^r\rr \end{array}}&  
{\displaystyle\bigoplus_{\begin{smallmatrix}r\in R(k,i)
\end{smallmatrix}}} \tau^-P_k \ar[u]^{\begin{array}{ccc}\hh \end{array}} \ar[r]&0  \\
0 \ar[r] &  P_{k} \ar@/^8mm/[uul]|{\begin{array}{ccc} u  \end{array}} \ar[r]^{\begin{array}{ccc}\aaa \ \  \end{array}} \ar@{^{(}->}[u]^{\begin{array}{ccc} e \end{array}} & {\displaystyle\bigoplus_{\begin{smallmatrix}a \in Q(k,-)
 \end{smallmatrix}}} P_{e(a)} \ar[r]^{\begin{array}{ccc}\ \ \ \  \g \end{array}} \ar[u]^{\begin{array}{ccc}e' \end{array}} \ar@/^8mm/[uul]|{\begin{array}{ccc} v  \end{array}} &  
 \tau^-P_k \ar[u]^{\begin{array}{ccc}p_{1k} \end{array}} \ar@/^5mm/[uul]|{\begin{array}{ccc} p_{2k}  \end{array}} \ar[r]&0.   }
\] 
\end{proof}

Now, using Theorem \ref{birs2}, we will show that $\PP(\widetilde{\mu}_k^L(Q_\Lambda,W_\Lambda,C_\Lambda))\cong \End_\Lambda(T)$. 
For the associated QP $(Q_\Lambda,W_\Lambda,C_\Lambda)$, we put $Q' :=\widetilde{\mu}_k(Q_\Lambda)$ and define an algebra homomorphism  $\phi':\widehat{KQ'} \to \End_\Lambda(T)$ as follows.   
\begin{itemize}

\item[(1)]Define $\phi' k=\mathbf{pi}\in\End_\Lambda(T)$, where $\mathbf{p}$ is the canonical projection $\mathbf{p}:T\to\tau^-P_k$ and $\mathbf{i}$ is the canonical injection $\mathbf{i}:\tau^-P_k\to T$.
For $i\in Q_0'$ and $i\neq k$, define $\phi'i=\phi i$. 

\item[(2)]For $a\in Q'\cap Q$, define $\phi' a =\phi a$.

\item[(3)]For $a^*,\rho_r^*, [\rho_ra]\in Q_1'$ with $\{a\in Q_1\ |\ s(a)=k\}$ and $\{r\in R\ |\ s(r)=k\}$, define 
$$\phi'(a^*)= g_a \in \Hom_\Lambda(P_{e(a)},\tau^-P_k),$$
$$\phi'(\rho_r^*)=-f_r \in \Hom_\Lambda(\tau^-P_k,P_{e(r)}),$$ 
$$\phi'([\rho_ra])=0,$$
where $g_a$ and $f_r$ are given in Observation \ref{Observ}. 
%(We simply denote $\phi' p$ by $p$ for any morphism $p$ in $KQ'$.)
\end{itemize}

Then, by Theorem \ref{birs2}, it is enough to show the following two lemmas. 
\begin{lemm}\label{lemm1}
%By applying the functor $\Hom_\Lambda(T,-)$ to the sequence $($\ref{al seq}$)$, 
We have the following projective resolution
\[
\xymatrix@C40pt@R20pt{ 0 \ar[r]^{}  &\Hom_\Lambda(T,{\displaystyle\bigoplus_{\begin{smallmatrix}a \in Q(k,-)\end{smallmatrix}}}P_{e(a)})  \ar[r]^{\ \ \begin{array}{ccc}\ \ \cdot\ \g \end{array}  } & \rad_\Lambda(T,\tau^-P_k) \ar[r]&0.     }
\]
%$$0 \longrightarrow \Hom_\Lambda(T,{\displaystyle\bigoplus_{\begin{smallmatrix}a \in Q(k,-)\end{smallmatrix}}}P_{e(a)})  \longrightarrow^{\g\ \ \ \ \ }  \rad_\Lambda(T, \tau^-P_k) \longrightarrow 0.$$ 
\end{lemm}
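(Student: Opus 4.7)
The strategy is to apply $\Hom_\Lambda(T,-)$ to the almost split sequence (\ref{al seq}) and exploit both its left exactness and the right almost split property of $\g$. Left exactness gives the exact sequence
$$0 \to \Hom_\Lambda(T,P_k) \to \Hom_\Lambda\Bigl(T,\bigoplus_{a\in Q(k,-)}P_{e(a)}\Bigr) \xrightarrow{\,\cdot\,\g\,} \Hom_\Lambda(T,\tau^-P_k),$$
so the lemma reduces to verifying two claims: (a) $\Hom_\Lambda(T,P_k)=0$, which yields injectivity of $\cdot\g$, and (b) the image of $\cdot\g$ is precisely $\rad_\Lambda(T,\tau^-P_k)$.

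For (a), I would use that $P_k$ is simple projective, so any nonzero morphism into $P_k$ must be a split epimorphism and therefore force its source to contain $P_k$ as a direct summand. Since $T=\tau^-P_k\oplus\Lambda/P_k$, the indecomposable summands of $\Lambda/P_k$ are projective and distinct from $P_k$, and $\tau^-P_k$ is non-projective (because $P_k$ is non-injective); hence $P_k\notin\add T$ and $\Hom_\Lambda(T,P_k)=0$.

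For (b), the right almost split property of $\g$ tells us that the image of $\cdot\g$ consists exactly of those morphisms $h\colon T\to\tau^-P_k$ that are not split epimorphisms, so the task is to identify this set with $\rad_\Lambda(T,\tau^-P_k)$. Writing $h=(h_1,h_2)$ with respect to the decomposition $T=\tau^-P_k\oplus\Lambda/P_k$, one uses that $\tau^-P_k$ is indecomposable and, being non-projective, is not a summand of $\Lambda/P_k$. This lets me argue that $h_1\in\End_\Lambda(\tau^-P_k)$ is an isomorphism if and only if $h$ is split epi, which in turn is equivalent to $h\notin\rad_\Lambda(T,\tau^-P_k)$.

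I expect the delicate point to be the radical characterization in (b): specifically, given a splitting $hs=1_{\tau^-P_k}$ with $s=\binom{s_1}{s_2}$, the composite $h_2 s_2$ factors through $\Lambda/P_k$ and so cannot be an isomorphism of $\tau^-P_k$ (else $\tau^-P_k$ would be a summand of the projective module $\Lambda/P_k$). Since $\End_\Lambda(\tau^-P_k)$ is local, $h_2 s_2$ lies in its Jacobson radical, forcing $h_1 s_1=1-h_2 s_2$ to be invertible and hence $h_1$ to be an isomorphism. Once this equivalence between split-epi, non-radical, and $h_1$ being invertible is established, the desired identification follows and the lemma is proved.
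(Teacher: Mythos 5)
Your proof is correct and follows essentially the same route as the paper, which simply asserts in one line that applying $\Hom_\Lambda(T,-)$ to the almost split sequence (\ref{al seq}) yields the exact sequence with $\Hom_\Lambda(T,P_k)=0$. Your points (a) and (b) just spell out the standard facts the paper leaves implicit, namely that $P_k\notin\add T$ and that, since $\tau^-P_k$ is indecomposable, the non-split-epimorphisms $T\to\tau^-P_k$ are exactly $\rad_\Lambda(T,\tau^-P_k)$.
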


\begin{proof}
Because (\ref{al seq}) is an almost split sequence and $P_k$ is a simple projective module, we obtain an exact sequence
$$0=\Hom_\Lambda(T,P_k) \longrightarrow \Hom_\Lambda(T,{\displaystyle\bigoplus_{\begin{smallmatrix}a \in Q(k,-)\end{smallmatrix}}}P_{e(a)})  \longrightarrow  \rad_\Lambda(T, \tau^-P_k) \longrightarrow 0.$$ 
\end{proof}

\begin{lemm}\label{lemm2}
%By applying the functor $\Hom_\Lambda(T,-)$ to the sequence $($\ref{exact seq}$)$, we have  a projective presentation of $J_{\End_\Lambda(T)}\phi'(i)$ for $i\in Q_0'$ with $i \neq k$.

For the sequence $($\ref{exact seq}$)$, we put $f_1=\begin{pmatrix} \bb  \\- \hh\end{pmatrix}$ and $f_2=\begin{pmatrix}\partial^r\rr & 0  \\ \partial^l\partial^r\rr &  \oplus \g\end{pmatrix}$. Then 
we have the following projective resolution for $i\in Q_0'$ with $i \neq k$. 
\[\xymatrix@C20pt@R25pt{ \Hom_\Lambda(T,{\displaystyle
\begin{array}{rr}
{\displaystyle\bigoplus_{\begin{smallmatrix}r \in R(-,i), r\notin R(k,-)
 \end{smallmatrix}}} P_{s(r)}\     \\
{\displaystyle\oplus\bigoplus_{\begin{smallmatrix}a \in Q(k,-),r\in R(k,i) \end{smallmatrix}} P_{e(a)}}
\end{array}})
   \ar[r]^{\ \ \ \ \ \cdot \ f_2} &
\Hom_\Lambda(T, {\displaystyle
\begin{array}{rr}
{\displaystyle\bigoplus_{\begin{smallmatrix}b \in Q(-,i) \end{smallmatrix}}} P_{s(b)}\ \ \\
{\displaystyle\oplus\bigoplus_{\begin{smallmatrix}r\in R(k,i)\end{smallmatrix}} \tau^-P_{k}}
\end{array}})  \ar[r]^{\ \ \ \ \ \ \ \  \ \ \cdot \ f_1} & \rad_\Lambda(T,P_i)\ar[r]&0.&     }\]
\end{lemm}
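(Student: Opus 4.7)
The plan is to apply $\Hom_\Lambda(T,-)$ to the exact sequence (\ref{exact seq}) of Lemma \ref{exact} and to verify three properties of the resulting complex: the image of $\cdot f_1$ lies in $\rad_\Lambda(T,P_i)$; this map is surjective onto the radical; and its kernel agrees with $\Image(\cdot f_2)$. The vanishing $(\cdot f_2)(\cdot f_1) = 0$ is immediate from $f_2 f_1 = 0$ in (\ref{exact seq}).

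First, because $\bb$ surjects onto $\rad P_i$ by minimality of (\ref{proj resol}) and because each $f_r:\tau^-P_k \to P_i$ is automatically radical (since $\tau^-P_k$ is indecomposable non-projective while $P_i$ is indecomposable projective and $\tau^-P_k \not\cong P_i$), both $p_2\bb$ and $p_1\hh$ lie in $\rad_\Lambda(T,P_i)$ for every $(p_2,p_1)$.

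Next, for surjectivity I split $\alpha = (\alpha_1,\alpha_2) \in \rad_\Lambda(T,P_i)$ along $T = \tau^-P_k \oplus \Lambda/P_k$. The component $\alpha_2$ lifts directly through $\bb$ by projectivity of $\Lambda/P_k$ and surjectivity of $\bb$ onto $\rad P_i$. For $\alpha_1$, I pre-compose with $\g$ from (\ref{al seq}) to obtain $\g\alpha_1:\bigoplus P_{e(a)} \to \rad P_i$, lift it through $\bb$ by projectivity to get $\tilde p:\bigoplus P_{e(a)} \to \bigoplus P_{s(b)}$, and observe that $(\aaa\tilde p)\bb = \aaa\g\alpha_1 = 0$. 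Since the proof of Lemma \ref{lift} only requires vanishing of $\Ext^1_\Lambda(-,\Image\LL)$, it applies to the projective module $P_k$ and supplies $w:P_k \to \bigoplus_{r\in R(-,i)} P_{s(r)}$ with $w(\partial^r\rr) = \aaa\tilde p$. I then split $w$ according to the decomposition $\bigoplus_{R(k,i)} P_k \oplus \bigoplus_{r\notin R(k,-)} P_{s(r)}$; the $\bigoplus_{R(k,i)} P_k$-component of $w$, combined with the defining commutation of the $f_r$ from Observation \ref{Observ}, yields the $p_1$-part of the preimage, and the remaining data assembles into the $p_2$-part.

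For exactness at the middle term, suppose $p_2\bb - p_1\hh = 0$. Lemma \ref{radical} forces $p_1 \in \rad_\Lambda(T, \bigoplus \tau^-P_k)$. Since $\g$ is right almost split at $\tau^-P_k$ and the $\Lambda/P_k$ summand of $T$ is projective, $p_1$ factors as $q'(\oplus\g)$ for some $q':T \to \bigoplus P_{e(a)}$. The commutation $(\partial^l\partial^r\rr)\bb = (\oplus\g)\hh$ of Observation \ref{Observ} then gives $(p_2 - q'(\partial^l\partial^r\rr))\bb = 0$, and Lemma \ref{lift} supplies a further lifting through $\partial^r\rr$. The main obstacle is that this lifting lives a priori in the full sum $\bigoplus_{r\in R(-,i)} P_{s(r)}$, whereas the statement demands a map into the smaller summand $\bigoplus_{r\notin R(k,-)} P_{s(r)}$; the stray components indexed by $r \in R(k,i)$ must be reabsorbed into a redefinition of $q'$ via the canonical injection $\mathbf{e}$ of Observation \ref{Observ}, exploiting that $\mathbf{e}(\partial^r\rr) = (\oplus\aaa)(\partial^l\partial^r\rr)$. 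The same decomposition discipline is needed in the surjectivity step, and this bookkeeping between the $R(k,i)$-summand and its complement is the technical heart of the proof.
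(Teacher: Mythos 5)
Your overall architecture matches the paper's: show that $\cdot f_1$ lands in and surjects onto $\rad_\Lambda(T,P_i)$ (equivalently, that $f_1$ is right almost split in $\add T$), and that $\Ker(\cdot f_1)=\Image(\cdot f_2)$ via Lemma \ref{radical}, the right almost split property of $\g$, and Lemma \ref{lift}. Your middle-exactness step is essentially the paper's Step 2. One correction there: the ``reabsorption'' you call the technical heart is actually vacuous, because $\Hom_\Lambda(T,P_k)=0$ ($P_k$ is simple projective and not a summand of $T$), so the components of the lift $q_2$ into the copies $P_{s(r)}=P_k$ with $r\in R(k,i)$ vanish automatically and $q_2$ lands in the smaller summand for free.

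The genuine gap is in your surjectivity argument, at ``the remaining data assembles into the $p_2$-part.'' After lifting $\g\alpha_1$ to $\tilde p$ and producing $w=(w_0,w_1):P_k\to\bigoplus_{r\in R(k,i)}P_k\oplus\bigoplus_{r\notin R(k,-)}P_{s(r)}$ with $w(\partial^r\rr)=\aaa\tilde p$, the relation $\mathbf{e}(\partial^r\rr)=(\oplus\aaa)(\partial^l\partial^r\rr)$ only gives $\aaa\bigl(\tilde p-\sigma(\partial^l\partial^r\rr)\bigr)=w_1\iota(\partial^r\rr)$, where $\sigma$ is the block-scalar map determined by $w_0$ and $\iota$ is the inclusion of the complementary summand. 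The right-hand side is not zero in general, so the corrected $\tilde p$ does not yet descend along $\g$ (the cokernel of $\aaa$) to the desired $p_{2k}:\tau^-P_k\to\bigoplus P_{s(b)}$. To finish you must observe that $w_1$ is a radical morphism out of $P_k$ (as $s(r)\neq k$ for the relevant $r$) and hence factors through the left minimal almost split map $\aaa$, say $w_1=\aaa v$; only after also subtracting $v\iota(\partial^r\rr)$ does precomposition with $\aaa$ kill the correction, allowing the descent to $\tau^-P_k$ and yielding $\alpha_1=p_{2k}\bb-p_{1k}\hh$ with $p_{1k}$ built from $w_0$. This appeal to the almost split property of $\aaa$ is exactly where the paper's Step 1 does its work (there packaged via the exact sequence of Lemma \ref{exact} and the factorization $w_2=\aaa u_1$), and it cannot be avoided: it is the only input that converts data on $\bigoplus P_{e(a)}$ and $P_k$ into a morphism defined on $\tau^-P_k$. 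Your sketch never invokes it, so as written the surjectivity step does not close, though it is repairable along the lines above.
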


\begin{proof}
%For the sake of simplicity, we put $f_1=\begin{pmatrix} \bb  \\- \hh\end{pmatrix}$ and $f_2=\begin{pmatrix}\partial^r\rr & 0  \\ \partial^l\partial^r\rr &  \oplus \g\end{pmatrix}$.

\emph{Step 1.} We will show that $f_1$ is right almost split in $\add T$.

First, we will show that any morphism $p\in \rad_\Lambda(\Lambda/P_k,P_i)$ factors through $f_1$.
Since $\Lambda/P_k$ is a projective module, 
%there exists $m:\Lambda/P_k \to {\displaystyle\bigoplus_{\begin{smallmatrix}b \in Q(-,i)  \end{smallmatrix}}} P_{s(b)}$ such that $m\bb=p$ by (\ref{proj resol}).
there exists $m:\Lambda/P_k \to {\displaystyle\bigoplus_{\begin{smallmatrix}b \in Q(-,i)  \end{smallmatrix}}} P_{s(b)}\oplus
{\displaystyle\bigoplus_{\begin{smallmatrix}r\in R(k,i)\end{smallmatrix}}} \tau^-P_k$ such that $mf_1=p$ by (\ref{proj resol}). 

Next, we take any morphism $p\in \rad_\Lambda(\tau^-P_k,P_i).$ 
Since ${\displaystyle\bigoplus_{\begin{smallmatrix}a \in Q(k,-) 
 \end{smallmatrix}}} P_{e(a)}$ is a projective module, there exists 
$w_1:{\displaystyle\bigoplus_{\begin{smallmatrix}a \in Q(k,-)  \end{smallmatrix}}} P_{e(a)}\to{\displaystyle\bigoplus_{\begin{smallmatrix}b \in Q(-,i) \end{smallmatrix}}} P_{s(b)}\oplus
{\displaystyle\bigoplus_{\begin{smallmatrix}r\in R(k,i)\end{smallmatrix}}} \tau^-P_k  $ such that 
$w_1f_1=\g p$ by (\ref{proj resol}). 
On the other hand, since the given sequence (\ref{exact seq}) is exact and $\aaa w_1f_1=0$, 
there exists $w_2:P_k\to {\displaystyle\bigoplus_{\begin{smallmatrix}r \in R(-,i),r\notin R(k,-) 
\end{smallmatrix}}} P_{s(r)} \oplus {\displaystyle\bigoplus_{\begin{smallmatrix}a \in Q(k,-),r\in R(k,i) \end{smallmatrix}}} P_{e(a)}$ such that 
$w_2f_2=\aaa w_1$. 
Note that $P_k$ is not a direct summand of ${\displaystyle\bigoplus_{\begin{smallmatrix}r \in R(-,i),r\notin R(k,-) \end{smallmatrix}}} P_{s(r)} \oplus {\displaystyle\bigoplus_{\begin{smallmatrix}a \in Q(k,-),r\in R(k,i) \end{smallmatrix}}} P_{e(a)}$ and, in particular, $w_2$ is not a split monomorphism.  
Then because $\aaa:P_k\to{\displaystyle\bigoplus_{\begin{smallmatrix}a \in Q(k,-) \end{smallmatrix}}} P_{e(a)}$ is left minimal almost split by (\ref{al seq}), there exists $u_1:{\displaystyle\bigoplus_{\begin{smallmatrix}a \in Q(k,-) \end{smallmatrix}}} P_{e(a)}\to{\displaystyle\bigoplus_{\begin{smallmatrix}r \in R(-,i),r\notin 
R(k,-) \end{smallmatrix}}} P_{s(r)} \oplus {\displaystyle\bigoplus_{\begin{smallmatrix}a \in Q(k,-),r\in R(k,i) \end{smallmatrix}}} P_{e(a)}$ such that $w_2=\aaa u_1$. 
Then by $\aaa (w_1-u_1f_2)=0$, there exists $u_2:\tau^-P_k\to {\displaystyle\bigoplus_{\begin{smallmatrix}b \in Q(-,i) \end{smallmatrix}}} P_{s(b)}\oplus
{\displaystyle\bigoplus_{\begin{smallmatrix}r\in R(k,i)\end{smallmatrix}}} \tau^-P_k$ such that 
$\g u_2=w_1-u_1f_2$.
Hence we have $\g u_2f_1=w_1f_1=\g p$ and therefore we obtain $u_2f_1=p$ since 
$\g$ is an epimorphism. 
\[\xymatrix@C60pt@R48pt{
   {\displaystyle
\begin{array}{rr}
{\displaystyle\bigoplus_{\begin{smallmatrix}r \in R(-,i), r\notin R(k,-)
 \end{smallmatrix}}} P_{s(r)}  \ \  \\
{\displaystyle\oplus\bigoplus_{\begin{smallmatrix}a \in Q(k,-),r\in R(k,i) \end{smallmatrix}} P_{e(a)}}
\end{array}} \ar[r]^{\begin{array}{ccc}\ \  f_2 \end{array}} & {\displaystyle
\begin{array}{rr}
{\displaystyle\bigoplus_{\begin{smallmatrix}b \in Q(-,i) \end{smallmatrix}}} P_{s(b)}\ \ \\
{\displaystyle\oplus\bigoplus_{\begin{smallmatrix}r\in R(k,i)\end{smallmatrix}} \tau^-P_{k}}
\end{array}} \ar[r]^{\begin{array}{ccc}\ \ \ \ f_1   \end{array}}  
& P_{i}  \\
P_{k} \ar[r]^{\begin{array}{ccc}\aaa \end{array}} \ar[u]|(0.4){\begin{array}{ccc}  w_2  \end{array}} & {\displaystyle\bigoplus_{\begin{smallmatrix}a \in Q(k,-) \end{smallmatrix}}} P_{e(a)}  \ar[ul]|(0.5){\begin{array}{ccc}u_1 \end{array}} \ar[r]^{\begin{array}{ccc}\ \ \ \  \ \g   \end{array}} \ar[u]|(0.4){\begin{array}{ccc}w_1 \end{array}}&  
 \tau^-P_k \ar[u]|{\begin{array}{ccc}p \end{array}}\ar[ul]|(0.5){\begin{array}{ccc}u_2 \end{array}}  \ar[r]&0.   }
\]

\emph{Step 2.} We will show that $f_2$ is a pseudo-kernel of $f_1$ in $\add T$.

Assume $\begin{pmatrix}p_2& p_1\end{pmatrix}\in\Hom_\Lambda(T,{\displaystyle\bigoplus_{\begin{smallmatrix}b \in Q(-,i) \end{smallmatrix}}} P_{s(b)}\oplus {\displaystyle\bigoplus_{\begin{smallmatrix}r\in R(k,i)\end{smallmatrix}}} \tau^-P_k )$ satisfies $\begin{pmatrix}p_2& p_1\end{pmatrix}\begin{pmatrix} \bb  \\-\hh \end{pmatrix}=0$.
By Lemma \ref{radical}, we can assume that $p_1\in \rad_\Lambda(T,{\displaystyle\bigoplus_{\begin{smallmatrix}r\in R(k,i)\end{smallmatrix}}} \tau^-P_k )$.
Since $\g$ is right minimal almost split by (\ref{al seq}), there exists $q_1\in\Hom_\Lambda(T,{\displaystyle\bigoplus_{\begin{smallmatrix}a \in Q(k,-), r\in R(k,i) \end{smallmatrix}}} P_{e(a)})$ such that 
$p_1= q_1(\oplus \g)$.
Hence, we have 
%$$(p_2-q_1(\partial^l\partial^r\rr))\bb=p_2\bb-q_1(\partial^l\partial^r\rr)\bb=p_2\bb- q_1(\oplus \g)\hh=p_2\bb- p_1\hh=0.$$
\begin{eqnarray*}
(p_2-q_1(\partial^l\partial^r\rr))\bb&=&p_2\bb-q_1(\partial^l\partial^r\rr)\bb \\
&=&p_2\bb- q_1(\oplus \g)\hh\\
&=&p_2\bb- p_1\hh \\
&=&0.
\end{eqnarray*}

Then, by Lemma \ref{lift}, 
there exists $q_2 :T \to {\displaystyle\bigoplus_{\begin{smallmatrix}r \in R(-,i) \end{smallmatrix}}} P_{s(r)}$ such that 
$q_2(\partial^r\rr) = p_2-q_1(\partial^l\partial^r\rr)$. 
Therefore, we have $\left(\begin{matrix}q_2&q_1\end{matrix}\right)
\left(\begin{smallmatrix}\partial^r\rr&0\\   \\\partial^l\partial^r\rr&\oplus\g\end{smallmatrix}\right)=
\left(\begin{matrix}p_2&p_1\end{matrix}\right).$

\[
\xymatrix@C50pt@R30pt{
&&&T \ar@/^8mm/[dd]^{\begin{array}{ccc}p_1 \end{array}}  \ar@/_3mm/[dl]|{\begin{array}{ccc}p_2 \end{array}} \ar[ddl]|(0.6){\begin{array}{ccc}q_1 \end{array}}  \ar@/_8mm/[dll]|{\begin{array}{ccc}q_2 \end{array}}    &\\
Q \ar[r]^{\begin{array}{ccc}\LL \end{array}} & {\displaystyle\bigoplus_{\begin{smallmatrix}r \in R(-,i)
\end{smallmatrix}}} P_{s(r)} \ar[r]^{\begin{array}{ccc}\partial^r\rr \end{array}}  &\ar[r]^{
\begin{array}{ccc}\ \ \ \bb \end{array}}  {\displaystyle\bigoplus_{\begin{smallmatrix}b \in Q(-,i) \end{smallmatrix}}} P_{s(b)}  & P_{i}  \\
0 \ar[r] & {\displaystyle\bigoplus_{\begin{smallmatrix}r \in R(k,i)\end{smallmatrix}}} 
P_{k} \ar[r]^{\begin{array}{ccc}\oplus \aaa \ \ \  \end{array}} \ar@{^{(}->}[u]^{\begin{array}{ccc}\ \ \ \ \mathbf{e} \end{array}} & {\displaystyle\bigoplus_{\begin{smallmatrix}a \in Q(k,-),r\in R(k,i) \end{smallmatrix}}} P_{e(a)} \ar[r]^{\begin{array}{ccc}\ \ \ \ \oplus \g \end{array}} \ar[u]^{\begin{array}{ccc}\partial^l\partial^r\rr \end{array}}&  
{\displaystyle\bigoplus_{\begin{smallmatrix}r\in R(k,i)
\end{smallmatrix}}} \tau^-P_k \ar[u]^{\begin{array}{ccc}\hh \end{array}} \ar[r]&0.   }
\] 
\end{proof}
Thus, we have completed the proof of Theorem \ref{main}.

\begin{remk}
We remark that we cannot drop the assumption $\id P_k\leq 2$. 
For example, let $\Lambda$ be the algebra given by following quiver with relations. 
$$\xymatrix{
2\ar[r]^b &  3\ar[r]^c \ar@{.}[ld]&4 \ar[d]^d  \\
1 \ar[u]^a& & 5 \ar@{.}[llu] & \langle R\rangle=\langle ab,bcd\rangle.} $$ 
Then we can calculate $Q'$ and $R'$ satisfying $\widehat{KQ'}/\overline{{\langle R'\rangle}}\cong \End_\Lambda(T_1)$, which is given by the following quiver with relations. 
%Note that these algebras were investigated in \cite[Proposition 2.3]{T}. 
 
$$\xymatrix{
2\ar[d]_a &  3\ar[r]^c &4 \ar[d]^d  \\
1 \ar[ru]^b& & 5 \ar@{.}[ll] &\langle R\rangle=\langle bcd\rangle.} $$
%\xymatrix{
%2\ar[d]_a &  3\ar[r]^c &4 \ar[d]^d  \\
%1 \ar[ru]^b& & 5, \ar@{--}[llu] &R=\langle abcd\rangle.}
%$
\end{remk}

\begin{remk}
We remark that we cannot define the APR co-tilting module associated with $k$ in $\End_\Lambda(T)$ since the vertex $k$ is not sink. 
In this case, however, we can define a BB co-tilting $\Gamma$-module $T_k':=\tau S_k\oplus D\Gamma/I_k$ \cite{BB}, where $\Gamma:=\End_\Lambda(T)$, and we have $\End_\Gamma(T_k')\cong\PP(\mu_k^R\mu_k^L(Q_\Lambda,W_\Lambda,C_\Lambda))\cong\Lambda$. 
Note that this is also pointed out in \cite[Proposition 3.2]{L}.

$$\xymatrix@C45pt@R35pt{
1\ar[d]_{a}    &4 \ar@{.}[l]&\overset{\mu_1^L}{\Longrightarrow}  \\
2 \ar[r]^b   &  3 \ar[u]_{c} & \overset{\mu_1^R}{\Longleftarrow} }\ \ \ \ \ \ \  \ \
\xymatrix@C45pt@R35pt{
1  \ar[r]^{\rho^*}   &4\ar@{.}[ld]     \\
2\ar[r]^b \ar[u]^{a^*}   & 3 \ar[u]^c}$$
$$\ \ \ \ \ \ \ \ \ \ \ \ \ \ \ \ \ \ \ \ \ \ \langle R\rangle=\langle abc \rangle.  \ \ \ \ \ \ \ \ \ \ \ \ \ \ \ \ \ \ \ \ \ \ \ \ \ \ \ \ \ \  \ \langle R'\rangle=\langle a^*\rho^*+bc  \rangle.\ \ \ \ \ \ \ \ \ \ \ \ \ \ \ $$
\end{remk}

%%%%%%%%%%%%%%%%%%%%%%%%%%%%%%%%%%%%%%%%%%%%
\section{QPs with algebraic cuts and mutations}
%%%%%%%%%%%%%%%%%%%%%%%%%%%%%%%%%%%%%%%%%%%%
In this section, we formulate the previous result in terms of algebraic cuts and consider successive mutations of QPs. 
We will show that QPs with algebraic cuts are closed under left mutations (respectively, right mutations) at a strict source (strict sink).  
As a consequence, mutations of QPs give APR-tilted (co-tilted) algebras and, in particular, provide 
a rich source of derived equivalent algebras with global dimension at most 2. 

In the rest of this paper, we assume that $Q$ is a finite connected quiver which is not only one vertex for simplicity. 
\begin{defi}\label{strict}\cite{HI}
Let $C$ be a cut of a QP $(Q,W)$.
 
$\bullet$ We say that a vertex $k$ of $Q$ is a \emph{strict source} (respectively, \emph{strict sink})
if all arrows ending (starting) at $k$ belong to $C$ and all arrows starting (ending) at $k$ do not belong to $C$. 
Note that a strict source (respectively, strict sink) in $Q$ gives to a source (sink) of $Q_C$.
%, where $Q_C$ is the subquiver of $Q$ with vertex set $Q_0$ and arrow set $Q_1\setminus C$.

$\bullet$ $C$ is called \emph{algebraic} if the following conditions are satisfied.
\begin{itemize}
\item[(1)] $\PP(Q,W,C)$ is a finite dimensional algebra with global dimension at most two.
\item[(2)] $\{\partial_cW\}_{c \in C}$ is a minimal set of generators of the ideal $\overline{{\langle \partial_c W \;|\; c \in C \rangle}}$ of $\widehat{{KQ_C}}$.
\end{itemize}
We denote by $\QQ_1$ the set of reduced QPs with algebraic cuts. 
%Remark that, if $(Q,W,C) \in \QQ_1$, then $\overline{{\langle \partial_c W \;|\; c \in C \rangle}}$ is a minimal set of relations of $\widehat{KQ_C}$.
\end{defi}

%Remark that algebraic cuts are crucial for the understanding of \emph{2-representation-finite} (\emph{2-representation-infinite}) algebras \cite{I} \cite{AIR} and bimodule 3-CY algebras \cite{G,HI,K,TV}. 
%The link between QPs with cuts and motivic Donaldson-Thomas invariants is explained in \cite{N}.

Then we will restate Theorem \ref{main} in terms of algebraic cuts and tilting mutations. This proposition implies that left mutations induce left tilting mutations of the truncated Jacobian algebra.

\begin{prop}\label{main3}
Let $(Q,W,C)$ be a QP of $\QQ_1$ and $k$ be its strict source. We put $\Lambda:=\PP(Q,W,C)$. Then we have $$\End_\Lambda(\nu_k^L(\Lambda))\cong\PP(\mu_{k}^L(Q,W,C)),$$
where $\nu_k^L$ is a left tilting mutation.
\end{prop}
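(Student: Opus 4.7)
The plan is to reduce to Theorem~\ref{main} after identifying $(Q,W,C)$ with the graded QP associated to $\Lambda=\PP(Q,W,C)$ via Definition~\ref{construct QP}.

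Since $C$ is algebraic, $R:=\{\partial_cW\mid c\in C\}$ is a minimal set of generators of its ideal in $\widehat{KQ_C}$, so $\Lambda=\widehat{KQ_C}/\overline{\langle R\rangle}$ is presented by the quiver $Q_C$ with minimal set of relations $R$, and $\gl\Lambda\le 2$. I would then apply Definition~\ref{construct QP} to this presentation and verify that the resulting graded QP is $(Q,W,C)$ itself. For each $c:i\to j$ in $C$, the relation $\partial_cW$ is a linear combination of paths from $j$ to $i$, so the formal arrow $\rho_{\partial_cW}:e(\partial_cW)\to s(\partial_cW)$ runs from $i$ to $j$ and is identified with $c$; under this identification $(Q_\Lambda)_0=Q_0$, $(Q_\Lambda)_1=(Q_C)_1\sqcup C=Q_1$, and $C_\Lambda=C$. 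Because $C$ is a cut, each cycle $p=a_1\cdots a_d$ in $W$ contains a unique letter $a_\ell\in C$, so
$$
W_\Lambda\;=\;\sum_{c\in C}\rho_{\partial_cW}\cdot\partial_cW\;=\;\sum_{c\in C}c\cdot\partial_cW
$$
contributes for this $p$ precisely the term $a_\ell a_{\ell+1}\cdots a_d a_1\cdots a_{\ell-1}$, which is the cyclic rotation of $p$ starting at its $C$-letter. Summing shows $W_\Lambda$ is cyclically equivalent to $W$, so $(Q_\Lambda,W_\Lambda,C_\Lambda)=(Q,W,C)$ as graded QPs.

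Next, I would verify the hypotheses of Theorem~\ref{main} at $k$. Since $k$ is a strict source, every arrow of $Q$ ending at $k$ lies in $C$, so $k$ is a source of $Q_C$ and $P_k=S_k$ is simple projective. The minimality of $R$ forces $\partial_cW\ne 0$ for every $c\in C$, so any $c\in C$ ending at $k$ must occur in some cycle of $W$; this rules out $k$ being a sink of $Q$, and hence any outgoing arrow of $k$ in $Q$ (which is not in $C$) survives in $Q_C$, making $P_k$ non-injective. The algebraic cut hypothesis also gives $\id P_k\le\gl\Lambda\le 2$. Theorem~\ref{main} combined with the identification above then yields
$$
\End_\Lambda(T_k)\;\cong\;\PP(\widetilde{\mu}_k(Q_\Lambda,W_\Lambda,C_\Lambda))\;\cong\;\PP(\widetilde{\mu}_k(Q,W,C)),
$$
and the isomorphism $\PP(\widetilde{\mu}_k^L(\cdot))\cong\PP(\mu_k^L(\cdot))$ already recorded in the text rewrites this as $\End_\Lambda(T_k)\cong\PP(\mu_k^L(Q,W,C))$. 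Finally, by Definition~\ref{tilt}, $\nu_k^L(\Lambda)=\tau^-P_k\oplus\Lambda/P_k=T_k$, which completes the proof.

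The substantive step is concentrated in the first paragraph: matching the source/target conventions so that $\rho_{\partial_cW}$ may be identified with $c$, and the cyclic-equivalence computation for $W_\Lambda$. Both rely crucially on $C$ being a genuine cut (one $C$-letter per cycle of $W$). Everything else is a routine verification of the hypotheses of Theorem~\ref{main}, together with an appeal to Definition~\ref{tilt} to identify $T_k$ with $\nu_k^L(\Lambda)$.
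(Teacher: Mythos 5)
Your proof is correct and follows exactly the route the paper intends: Proposition~\ref{main3} is presented there as a direct restatement of Theorem~\ref{main}, and you supply the (omitted) verification that $(Q_\Lambda,W_\Lambda,C_\Lambda)$ recovers $(Q,W,C)$ up to cyclic equivalence and that the algebraic-cut and strict-source hypotheses yield a minimal presentation, $\gl\Lambda\le 2$, and a simple projective non-injective $P_k$ with $T_k=\nu_k^L(\Lambda)$. (The only degenerate case your non-injectivity argument leaves implicit --- no arrow of $Q$ ending at $k$ at all --- is excluded by the standing assumption that $Q$ is connected and has more than one vertex.)
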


Then, it is natural to consider iterated mutations of QPs. Indeed, 
we will show that $\mu_{k}^L(Q,W,C)$ also belongs to $\QQ_1$ and therefore we can apply Proposition \ref{main3} repeatedly. 
The following easy observation is useful.

\begin{lemm}\label{reduced}
Let $(Q,W,C)$ be a QP with a cut and assume that 
$(Q,W,C)$ is graded right-equivalent to the direct sum 
$$(Q_{red},W_{red},C_{red})\oplus(Q_{tri},W_{tri},C_{tri}),$$ 
where $(Q_{red},W_{red},C_{red})$ is reduced and $(Q_{tri},W_{tri},C_{tri})$ is trivial. 
If $\{\partial_cW\}_{c \in C}$ is a minimal set of generators of the ideal $\overline{{\langle \partial_c W \;|\; c \in C \rangle}}$ of $\widehat{KQ_{C}}$, then $\{\partial_cW_{red}\}_{c \in C_{red}}$ is also a minimal set of generators of the ideal $\overline{{\langle \partial_c W_{red} \;|\; c \in C_{red} \rangle}}$ of $\widehat{K(Q_{red})_{C_{red}}}.$ 
\end{lemm}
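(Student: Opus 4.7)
The plan is to realize the splitting by a graded right-equivalence $\phi : (\widehat{KQ}, d_C) \to (\widehat{KQ'}, d_{C'})$, where $Q' := Q_{red} \amalg Q_{tri}$ and $C' := C_{red} \amalg C_{tri}$, with $\phi$ the identity on $\widehat{KQ_0}$ and $\phi(W)$ cyclically equivalent to $W' := W_{red} + W_{tri}$. Since $\phi$ preserves the $C$-grading, and the degree-$0$ subalgebras of $\widehat{KQ}$ and $\widehat{KQ'}$ coincide with $\widehat{KQ_C}$ and $\widehat{K(Q')_{C'}}$ respectively, $\phi$ restricts to a ring isomorphism $\phi_0 : \widehat{KQ_C} \to \widehat{K(Q')_{C'}}$. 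By the standard fact that right-equivalences of potentials carry Jacobian ideals to Jacobian ideals (DWZ), $\phi_0$ carries $I := \overline{\langle \partial_c W : c \in C\rangle}$ onto $I' := \overline{\langle \partial_{c'} W' : c' \in C'\rangle}$. Because $Q_{red}$ and $Q_{tri}$ have disjoint arrow sets, one has $\partial_c W' = \partial_c W_{red}$ for $c \in C_{red}$ while $\partial_{a_i} W' = b_i$ for each $a_i \in C_{tri}$ (writing $W_{tri} = \sum_i a_i b_i$ in its canonical form), so that $I'$ admits the natural generating set $\{\partial_c W_{red}\}_{c \in C_{red}} \sqcup \{b_i\}_i$.

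The core step is a Nakayama-type cardinality count. Setting $A' := \widehat{K(Q')_{C'}}$, which is a complete semilocal ring with semisimple residue algebra $KQ_0$, the quotient $I'/(J_{A'}I' + I'J_{A'})$ is a finite-dimensional $KQ_0$-bimodule (as $I'$ is finitely generated), and a Nakayama-type argument shows that every basic generating set of $I'$ has size at least $n := \dim_K I'/(J_{A'}I' + I'J_{A'})$, with equality characterizing minimality. The hypothesis, combined with $\phi_0$ being a ring isomorphism, forces $\phi_0(\{\partial_c W\}_{c \in C})$ to be a basic minimal generating set of $I'$ of cardinality $|C|$, so $n = |C|$. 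Moreover $\phi$ preserves both the $C$-grading and the length filtration (the latter being intrinsic as the powers of the Jacobson radical), so comparing length-$1$ degree-$1$ parts on both sides yields $|C| = |C'| = |C_{red}| + |C_{tri}|$. Thus the natural set $\{\partial_c W_{red}\}_{c \in C_{red}} \sqcup \{b_i\}_i$, having size $n$ and generating $I'$, is likewise minimal.

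Finally, to descend to the reduced part, suppose for contradiction that $\{\partial_c W_{red}\}_{c \in C_{red}}$ is not minimal in $\widehat{K(Q_{red})_{C_{red}}}$; then some $\partial_{c_0} W_{red}$ lies in the closure of the ideal generated by the remaining $\partial_c W_{red}$. Via the natural inclusion of complete path algebras $\widehat{K(Q_{red})_{C_{red}}} \hookrightarrow A'$ arising from the disjoint-quiver decomposition, the same membership persists in $A'$, so $\partial_{c_0} W_{red}$ can be dropped from the combined generating set $\{\partial_c W_{red}\}_{c \in C_{red}} \sqcup \{b_i\}_i$ while still generating $I'$, contradicting the minimality established above. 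The main obstacle is the Nakayama-type statement in the complete semilocal setting; with that and the preservation of Jacobian ideals under right-equivalence in hand, the remainder is routine bookkeeping about disjoint quivers together with an intrinsic-degree comparison through $\phi_0$.
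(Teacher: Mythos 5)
Your argument is correct and follows essentially the same route as the paper: transfer the ideal and its minimal generating set through the graded right-equivalence to the split form $(Q_{red},W_{red},C_{red})\oplus(Q_{tri},W_{tri},C_{tri})$, and then use the disjointness of the trivial part (whose derivatives are just the arrows $b_i$) to descend minimality to $\{\partial_cW_{red}\}_{c\in C_{red}}$ inside $\widehat{K(Q_{red})_{C_{red}}}$. The only substantive addition is your Nakayama-type cardinality count identifying all irredundant basic generating sets with bases of $I'/(J_{A'}I'+I'J_{A'})$, which correctly justifies passing from minimality of $\phi_0(\{\partial_cW\}_{c\in C})$ to minimality of the different set $\{\partial_{c'}W'\}_{c'\in C'}$ — a step the paper asserts with the terse phrase ``by the assumption.''
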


\begin{proof} 
We put $Q':=Q_{red}\oplus Q_{tri}$, $W':=W_{red}+W_{tri}$, and $C':=C_{red}+C_{tri}$. 
Then, by the assumption, $\{\partial_cW'\}_{c \in C'}$ is a minimal set of generators of ${\langle \partial_c W' \;|\; c \in C' \rangle}$. 
Because $\{\partial_cW_{red}\}_{c \in C_{red}}$ and 
$\{\partial_cW_{tri}\}_{c \in C_{tri}}$ are linearly independent in ${\langle \partial_c W' \;|\; c \in C' \rangle}$, 
the assertion follows. 
\end{proof} 

Then, we show that $\QQ_1$ are closed under left mutations at a strict source.

\begin{theorem}\label{algebraic}
Let $(Q,W,C)$ be a QP of $\QQ_1$ and $k$ be its strict source.
Then we have $\mu_k^L(Q,W,C)\in \QQ_1$.
\end{theorem}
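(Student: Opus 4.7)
The plan is to verify the two conditions of Definition \ref{strict}(2) for $(Q',W',C'):=\mu_k^L(Q,W,C)$. Reducedness of $(Q',W',C')$ holds by construction of $\mu_k^L$, so the task is to show that $\PP(Q',W',C')$ is finite-dimensional of global dimension at most $2$ and that $\{\partial_c W'\}_{c\in C'}$ is a minimal generating set of the corresponding ideal in $\widehat{KQ'_{C'}}$.

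My first step is to pass to the tilting-theoretic side via Proposition \ref{main3}. Setting $\Lambda:=\PP(Q,W,C)$, the strict-source hypothesis makes $k$ a source of the Gabriel quiver $Q_C$ of $\Lambda$, so the APR tilting module $T_k:=\nu_k^L(\Lambda)$ is defined and $\PP(Q',W',C')\cong\End_\Lambda(T_k)=:\Gamma$. Finite-dimensionality of $\Gamma$ is then automatic. For $\gl\Gamma\le 2$, one has $\id P_k\le 2$ (since $\gl\Lambda\le 2$), and one analyses the torsion pair induced by $T_k$: the simple $\Gamma$-modules coming from summands of $\Lambda/P_k$ inherit projective resolutions controlled by those of the corresponding $\Lambda$-modules, while the simple corresponding to $\tau^- P_k$ has its projective resolution bounded via the derived equivalence $D^b(\Lambda)\simeq D^b(\Gamma)$ and the bound $\id P_k\le 2$.

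The main obstacle is the minimality of $\{\partial_c W'\}_{c\in C'}$. My strategy is to invoke Lemma \ref{reduced}: it suffices to establish that $\{\partial_c\widetilde W\}_{c\in\widetilde C}$ is a minimal generating set in $\widehat{K\widetilde Q_{\widetilde C}}$, where $(\widetilde Q,\widetilde W,\widetilde C):=\widetilde\mu_k^L(Q,W,C)$. The cut $\widetilde C$ decomposes into cut arrows $c\in C$ not adjacent to $k$ and new arrows $[ca]$ with $e(c)=k=s(a)$, with $\widetilde W=[W]+\Delta$. The derivative $\partial_{[ca]}\widetilde W$ contains the distinguished summand $a^*c^*$ coming from $\Delta$; since $a^*$ and $c^*$ have degree $0$ in $\widetilde Q_{\widetilde C}$ and do not appear in any derivative of the first type, the $[ca]$-derivatives contribute independent directions. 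Any hypothetical non-trivial relation among the $\partial_c\widetilde W$'s would, after the formal substitution reversing mutation ($[ca]\mapsto ca$, $a^*\mapsto 0$, $c^*\mapsto 0$), project to a corresponding relation among $\{\partial_c W\}_{c\in C}$, contradicting the minimality of the original algebraic cut. Making this pull-back argument rigorous, and in particular tracking how the reduction from $\widetilde\mu_k^L$ to $\mu_k^L$ cancels $2$-cycles in $\Delta$ without disturbing minimality, is the technical heart of the proof.
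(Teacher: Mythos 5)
Your reduction of the problem to (i) finite dimensionality and $\gl\leq 2$ of the truncated Jacobian algebra and (ii) minimality of $\{\partial_cW'\}_{c\in C'}$, followed by Lemma \ref{reduced} to pass from $\widetilde{\mu}_k^L$ to the reduced part, matches the paper's overall structure, and part (i) is essentially right (the paper simply quotes \cite[Proposition 1.15]{APR} for $\gl\leq 2$ rather than re-deriving it from the torsion pair, but your route would work). The problem is part (ii), which you yourself flag as ``the technical heart'' and leave unproved; as described, the pull-back argument does not go through. The continuous algebra homomorphism $\widehat{K\widetilde{Q}}\to\widehat{KQ}$ sending $[ca]\mapsto ca$, $a^*\mapsto 0$, $c^*\mapsto 0$ maps $\partial_{[ca]}\widetilde{W}$ not to $\partial_cW$ but to the \emph{left derivative} $\partial^l_a(\partial_cW)$, an element that in general does \emph{not} lie in $\overline{\langle\partial_cW\mid c\in C\rangle}$: already in Example \ref{exam1} one has $\partial_\rho W=ab$ while the image of $\partial_{[\rho a]}W'$ is $b$, a single arrow, which cannot belong to an ideal generated by length-$2$ relations. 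Consequently a hypothetical ideal membership $\partial_{c_0}\widetilde{W}\in\overline{\langle\partial_{c}\widetilde{W}\mid c\neq c_0\rangle}$ whose coefficients involve the new arrows or the $[ca]$-derivatives does not project to a violation of minimality of $\{\partial_cW\}_{c\in C}$, so no contradiction is obtained. Note also that $\partial_{[ca]}\widetilde{W}=a^*c^*+\partial_{[ca]}[W]$ typically contains linear terms (this is exactly why the reduction step is nontrivial), so ``independent directions'' in the linear sense is not the same as minimality in the sense of Definition \ref{truncated}, which is an ideal-theoretic condition.

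The paper closes this gap by an entirely homological argument rather than a combinatorial one: by Theorem \ref{birs2}, minimality of a relation set for $\Gamma=\End_\Lambda(T_k)$ is equivalent to exactness (and minimality) of explicit projective presentations of the $J_\Gamma e_i$, and Lemmas \ref{lemm1} and \ref{lemm2} produce precisely these presentations with the maps given by the entries of $\partial_{c'}W'$. The hypothesis that $C$ is algebraic (so $\gl\Lambda\leq 2$ and the original relation set is minimal) forces the term $Q$ in the resolution (\ref{proj resol}) to vanish, so these presentations have length at most one and are minimal, which yields minimality of $\{\partial_{c'}W'\}_{c'\in C'}$; Lemma \ref{reduced} then transfers this to the reduced part. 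If you want to keep your more combinatorial strategy, you would need, at minimum, to control relations whose coefficients involve $a^*$, $c^*$ and the $[ca]$-derivatives without annihilating them, and to explain how minimality survives the cancellation of the $2$-cycles created by $\Delta$; as it stands the decisive step is missing.
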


\begin{proof} 
%First, by the argument of Subsection 2.1, degree 1 arrows give a cut of $\mu_k^L(Q,W,C)$. 

%$\mu_k^L(Q,W,C)$ is homogeneous of degree 1. Therefore%from the assumption of a strict source and by \cite[Proposition 6.5]{AO1} and%and the definition of $\widetilde{\mu}_k^L$, 
%any arrow of $\widetilde{\mu}_k^L(Q,W,C)$ has degree 0 or 1, and hence 
%any arrow of $\mu_k^L(Q,W,C)$ also has degree 0 or 1. On the other hand, 
We put $\Lambda:=\PP(Q,W,C)$. Then for the APR tilting $\Lambda$-module $T_k$, we have $\PP(\mu^L_k(Q,W,C))\cong \End_\Lambda(T_k)$ by Theorem \ref{main}. 
Thus, $\PP(\mu^L_k(Q,W,C))$ is a finite dimensional algebra, and 
we have $\gl\PP(\mu^L_k(Q,W,C))\leq 2$ by \cite[Proposition 1.15]{APR}. 

Next, we put $(Q',W',C'):=\widetilde{\mu}^L_k(Q,W,C)$. 
By the assumption of $(Q,W,C)$, $\overline{{\langle \partial_{c} W \;|\; c \in C \rangle}}$ is a minimal set of relations of $\widehat{KQ_{C}}$ and $\gl\Lambda\leq2$. 
Then the projective resolutions of Lemmas \ref{lemm1}, \ref{lemm2} have length at most 1 and hence $\{\partial_{c'}W'\}_{c' \in C'}$ is a minimal set of generators of $\overline{{\langle \partial_{c'} W' \;|\; c' \in C' \rangle}}$ in $\widehat{KQ_{C'}}$. %(cf. \cite[Section 3]{BIRSm}). 
Then, by the graded splitting theorem and Lemma \ref{reduced}, we have $\mu_k^L(Q,W,C)\in \QQ_1$.
\end{proof}
Summarizing the preceding results and their dual statements, we obtain the following corollary.  
In the corollary, we consider the following correspondence.

\begin{itemize}
\item[(1)] If $k$ is a strict source, 
we denote by $\nu_k$ left tilting mutation and by $\mu_k$ left mutation of a QP. 

\item[(2)] If $k$ is a strict sink, 
we denote by $\nu_k$ right co-tilting mutation and by $\mu_k$ right mutation of a QP. 

\end{itemize}
We put ${\upsilon}_k(\Lambda):=\End_\Lambda(\nu_k(\Lambda))$ for simplicity.

\begin{cor}\label{iterated mut} 
Let $(Q,W,C)$ be a QP of $\QQ_1$ and put $\Lambda:=\PP(Q,W,C)$. 
Assume that a vertex $k_i$ is a strict source or strict sink in $\mu_{k_{1}}\circ\cdots\circ\mu_{k_{i-1}}(Q,W,C)$ for any $1\leq i\leq N$.
Then we have 
$$\PP(\mu_{k_1}\circ\cdots\circ\mu_{k_N}(Q,W,C))\cong\upsilon_{k_1}\circ\cdots\circ\upsilon_{k_N}(\Lambda).$$

\end{cor}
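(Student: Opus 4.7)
The plan is a straightforward induction on $N$, built on Proposition \ref{main3}, Theorem \ref{algebraic}, and their analogues for strict sinks (which follow by applying the original results to $\Lambda^{\mathrm{op}}$, under which strict sources and strict sinks are interchanged, as are left and right (co-)tilting mutations and left and right QP mutations).

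For $N = 1$, Proposition \ref{main3} is precisely the statement when $k_1$ is a strict source, and its dual handles the strict sink case. For the inductive step I would set $(Q',W',C') := \mu_{k_1}(Q,W,C)$ and $\Lambda' := \upsilon_{k_1}(\Lambda)$. Theorem \ref{algebraic} (or its dual) guarantees $(Q',W',C') \in \QQ_1$, and Proposition \ref{main3} supplies an algebra isomorphism $\PP(Q',W',C') \cong \Lambda'$. With the convention in the corollary statement that $\mu_{k_1}\circ\cdots\circ\mu_{k_j}$ means \emph{apply $\mu_{k_1}$ first and $\mu_{k_j}$ last}, the strict source/sink hypothesis on $k_2,\ldots,k_N$ becomes precisely the hypothesis of the corollary applied to the starting QP $(Q',W',C')$ with the shorter sequence $(k_2,\ldots,k_N)$. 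The inductive hypothesis then gives
$$\PP(\mu_{k_2}\circ\cdots\circ\mu_{k_N}(Q',W',C')) \cong \upsilon_{k_2}\circ\cdots\circ\upsilon_{k_N}(\Lambda'),$$
and unwinding the definitions of $(Q',W',C')$ and $\Lambda'$ yields the desired isomorphism.

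The only real obstacle is a mild bookkeeping one: I must check that the isomorphism of Proposition \ref{main3} preserves vertex labels, so that a strict source or sink at $k_i$ on the QP side corresponds to the vertex at which the next APR tilt or co-tilt is performed on the algebra side. This is automatic because the algebra homomorphism $\phi'$ constructed in the proof of Theorem \ref{main} is the identity on $Q_0$, and subsequent QP mutations and tilting mutations also leave the vertex set fixed. With that identification in place, no new techniques are needed beyond Proposition \ref{main3}, Theorem \ref{algebraic}, and their duals.
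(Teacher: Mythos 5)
Your proof is correct and follows essentially the same route as the paper: the paper's own argument is exactly the combination of Proposition \ref{main3} (or its dual) for one step, Theorem \ref{algebraic} (or its dual) to conclude the mutated QP stays in $\QQ_1$, and iteration. Your explicit attention to the composition convention and to the vertex-labelling compatibility is a harmless elaboration of what the paper leaves implicit.
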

\begin{proof} 
By Proposition \ref{main3} or its dual statement, we have an isomorphism 
$$\PP(\mu_{k_1}(Q,W,C))\cong\End_\Lambda(\nu_{k_1}(\Lambda)).$$
Then, by Proposition \ref{algebraic} or its dual statement, we have $\mu_{k_1}(Q,W,C)\in\QQ_1$.
Therefore, we can apply Proposition \ref{main3} repeatedly and the conclusion follows.
\end{proof}

\begin{exam}
(1) It is known that, for a concealed algebra $\Lambda$ of the Euclidean type $\widetilde{Q}$, 
$\Lambda$ may be transformed by a sequence of APR tilts to $K\widetilde{Q}$ \cite{H}. 
Then we can restate the claim by saying that the associated QP of $\Lambda$ may be transformed by a sequence of left mutations at strict sources to $(\widetilde{Q},0,\emptyset)$.   
The left-hand side algebra, where the relations are given by commutative relations, is a concealed algebra $\Lambda$ of the Euclidean type $\widetilde{\mathbb{D}}_9$ \cite{SS} and 
mutation procedures immediately show that we can obtain $K\widetilde{\mathbb{D}}_9$ by a sequence of APR tilts at the vertices 1,2, and 3. 

\[\xymatrix@C10pt@R10pt{
  &1 \ar[ld] \ar[rd] \ar@{.}[dd] & &&   &2 \ar[ld] \ar[rd] \ar@{.}[dd]  & \\
3\ar[d]           &  &\circ \ar[ld] \ar[r]&\circ&\circ\ar[rd] \ar[l]  &  &\circ \ar[ld] &\overset{\mu_1^L\mu_2^L\mu_3^L}{\Longrightarrow}\\
\circ \ar[r] &\circ    & & &  &\circ      & }\ \ \ \ \ \ \ \  
\xymatrix@C10pt@R10pt{
  &1 \ar[ld]  & &&   &2   \ar[dd]  & \\
3  \ar[rd]         &  &\circ \ar[lu] \ar[r]&\circ&\circ\ar[ru] \ar[l]  &  &\circ \ar[lu] \\
\circ \ar[u] &\circ    & & &  &\circ      & }\]

(2) 
In \cite{HI}, the authors gave a method to obtain a selfinjective QP (i.e., the Jacobian algebra is finite dimensional and selfinjective) by mutations of QPs from a given selfinjective one. 
In the following diagram, 
from a left-hand side selfinjective QP, we can obtain a new selfinjective one 
by mutations $\mu_1\mu_6\mu_3$.
$$\xymatrix@C10pt@R15pt{
 & & 3 \ar[rd]^{b_1}& && \overset{\mu_1\mu_6\mu_3}{\Longrightarrow}\\
 & 5\ar[ru]^{c_1}  \ar[rd]|{b_2}& & 2\ar[rd]^{b_3} \ar[ll]|{a_1}&& \\
6 \ar[ru]^{c_2}& & 4 \ar[ll]|{a_2} \ar[ru]|{c_3} & & 1\ar[ll]|{a_3} } 
\xymatrix@C10pt@R15pt{
 & & 3 \ar[ld]_{a_2}& & \\
 & 5  \ar[ld]_{a_3} & & 2\ar[lu]_{a_1} & \\
6 \ar[rr]|{a_4}& & 4 \ar[rr]|{a_5}  & & 1\ar[lu]_{a_6}}$$
$${}_{W_1=\sum a_ic_ib_i-a_1b_2c_3, \ \ \ \ \ \ \ \ \ \ \ \ \ \ \ \ \  \ \ \ \ \ \  W_2=a_1a_2a_3a_4a_5 a_6. \ \ \ \ \ \ }$$

Moreover, they showed that, if a selfinjective QP have a cut, a \emph{2-representation-finite algebra} \cite{IO1} was given by taking the truncated Jacobian algebras.    
In this examples we can take, for instance, $\{a_1,a_2,a_3 \}$ as a cut of $W_1$ and $\{a_1 \}$ as a cut of $W_2$ and obtain two 2-representation-finite algebras.  
We are now able to know the relationship between them.  
The following diagram immediately shows that they are derived equivalent.
%(Note that $(\mu_3^L)^{-1}=\mu_3^R$ is corresponding to the BB co-tilting module). 
$$\xymatrix@C5pt@R10pt{
 & & 3 \ar[rd]& &&\overset{\mu_1^R}{\Longrightarrow} \\
 & 5\ar[ru] \ar@{.}[rr] \ar[rd]& & 2\ar[rd] && \\
6 \ar@{.}[rr]\ar[ru]& & 4 \ar@{.}[rr] \ar[ru] & & 1}
\xymatrix@C3pt@R10pt{
 & & 3 \ar[rd]& & &\overset{\mu_6^L}{\Longrightarrow}\\
 & 5\ar[ru] \ar@{.}[rr] \ar[rd]& & 2 & \\
6 \ar@{.}[rr]\ar[ru]& & 4 \ar[rr]  & & 1\ar[lu]}
\xymatrix@C2pt@R10pt{
 & & 3 \ar[rd]& & \overset{\mu_3^L}{\Longleftarrow}\\
 & 5\ar[ru] \ar@{.}[rr] \ar[ld] & & 2 & \\
6 \ar[rr]& & 4 \ar[rr]  & & 1\ar[lu]}
\xymatrix@C2pt@R10pt{
 & & 3 \ar[ld]\ar@{.}[rd]& & \\
 & 5  \ar[ld] & & 2 & \\
6 \ar[rr]& & 4 \ar[rr]  & & 1\ar[lu]}$$
This example also shows that, while mutations of QPs do not correspond to the tilting mutations for the first two Jacobian algebras (since they are selfinjective), they often induce 
tilting mutations for the truncated Jacobian algebras by Proposition \ref{main3}.  
\end{exam}
In this way, we can treat various kinds of representation theoretical arguments by purely combinatorial
methods.

\section{application}

Finally we pick up the following question. 

\begin{question}\cite[Question 12.2]{DWZ}
Let $(Q,W)$ be a QP without loops and $k\in Q$ is a vertex not lying on any 2-cycle.  
If $\PP(Q,W)\cong\PP(Q',W')$ for some QP $(Q',W')$,  
do we have $\PP(\mu_k(Q,W))\cong\PP(\mu_k(Q',W'))$ ? 
\end{question}
As an immediate consequence of previous result, we give a sufficient condition of QPs which have a positive answer.

\begin{theorem}\label{app1}
Let $(Q,W,C)$ and $(Q',W',C')$ be QPs of $\QQ_1$ and $k$ be a strict source or strict sink of $(Q,W,C)$.
If $\PP(Q,W,C) \cong\PP(Q',W',C')$, 
then we have algebra isomorphisms $\PP(Q,W)\cong\PP(Q',W')$ and $\PP(\mu_k(Q,W))\cong\PP(\mu_k(Q',W'))$.  
\end{theorem}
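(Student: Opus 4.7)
The plan is to show that within $\QQ_1$ the truncated Jacobian algebra determines the QP up to graded right-equivalence; granted this, both statements of the theorem follow rapidly by combining Proposition \ref{main3} with Theorem \ref{algebraic} and then forgetting gradings.

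The key lemma I would establish first is the following: for any $(Q,W,C)\in\QQ_1$ with $\Lambda:=\PP(Q,W,C)$, the associated QP $(Q_\Lambda,W_\Lambda,C_\Lambda)$ constructed from $\Lambda$ via Definition \ref{construct QP} is graded right-equivalent to $(Q,W,C)$. The point is that, by algebraicity of $C$, the family $\{\partial_c W\mid c\in C\}$ is an admissible minimal set of relations for $\Lambda=\widehat{KQ_C}/\overline{\langle \partial_c W\mid c\in C\rangle}$. A direct inspection of the arrow directions shows $s(\rho_{\partial_c W})=s(c)$ and $e(\rho_{\partial_c W})=e(c)$, so the identification $\rho_{\partial_c W}\leftrightarrow c$ turns $Q_\Lambda$ into $Q$ and $C_\Lambda$ into $C$; the assembled potential
$$W_\Lambda=\sum_{c\in C}\rho_{\partial_c W}\,\partial_c W=\sum_{c\in C}c\,\partial_c W$$
is cyclically equivalent to $W$, since every cycle in $W$ contains exactly one arrow of $C$ and therefore contributes once, up to cyclic rotation. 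Independence of the Keller construction from the particular minimal presentation reduces to the statement that two minimal presentations of a basic finite-dimensional algebra differ by a vertex-fixing automorphism of $\widehat{KQ_C}$, which one transports to a graded right-equivalence of the associated QPs by extending by the identity on the $\rho_r$-arrows.

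Granted this lemma, the first assertion is immediate: both $(Q,W,C)$ and $(Q',W',C')$ are graded right-equivalent to the common associated QP of $\Lambda\cong\Lambda'$, and hence to each other; forgetting the grading yields a right-equivalence of QPs, whence $\PP(Q,W)\cong\PP(Q',W')$. For the second assertion I would combine Proposition \ref{main3} (or its strict-sink dual) with the algebra isomorphism $\Lambda\cong\Lambda'$ to obtain
$$\PP(\mu_k^L(Q,W,C))\cong\End_\Lambda(\nu_k^L(\Lambda))\cong\End_{\Lambda'}(\nu_k^L(\Lambda'))\cong\PP(\mu_k^L(Q',W',C')),$$
while Theorem \ref{algebraic} ensures that $\mu_k^L(Q,W,C)$ and $\mu_k^L(Q',W',C')$ again lie in $\QQ_1$. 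Applying the key lemma to this new pair forces them to be graded right-equivalent, and the graded splitting theorem guarantees that their underlying reduced QPs coincide with $\mu_k(Q,W)$ and $\mu_k(Q',W')$ respectively; forgetting the grading, the desired isomorphism $\PP(\mu_k(Q,W))\cong\PP(\mu_k(Q',W'))$ follows.

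The main obstacle I anticipate is the presentation-independence in the key lemma: checking that Keller's construction is well-defined up to \emph{graded} right-equivalence (and not merely up to ungraded right-equivalence) when the minimal presentation of $\Lambda$ is varied. Although standard in spirit, this point needs care because the vertex-fixing automorphism matching two minimal sets of relations has to be lifted to an isomorphism of complete path algebras that respects the assignment of degree $1$ to the $\rho_r$-arrows. All remaining steps are direct applications of results already proved earlier in the paper.
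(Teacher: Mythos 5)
Your overall strategy is sound and the endgame (combining Proposition \ref{main3}, Theorem \ref{algebraic}, and the graded splitting theorem) matches the paper, but your route to the first isomorphism is genuinely different from the author's, and it is precisely at the point you flag as ``the main obstacle'' that your sketch does not yet work. The paper does not attempt to show that Keller's construction $\Lambda\mapsto(Q_\Lambda,W_\Lambda)$ is independent of the chosen minimal presentation; instead it invokes the intrinsic description $\PP(Q_\Lambda,W_\Lambda)\cong\widehat{\Pi}_3(\Lambda)$ (Proposition \ref{preproj QP2}), where the complete $3$-preprojective algebra $\widehat{\Pi}_3(\Lambda)=\prod_{i\ge0}\Ext^2_\Lambda(D\Lambda,\Lambda)^{\otimes_\Lambda^i}$ is manifestly an invariant of the algebra $\Lambda$ alone. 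This makes Corollary \ref{3-preproj}, and hence both isomorphisms of the theorem, immediate without any discussion of presentations. Your computation that $(Q_\Lambda,W_\Lambda,C_\Lambda)$ recovers $(Q,W,C)$ for a \emph{fixed} presentation (the identification $\rho_{\partial_cW}\leftrightarrow c$ and the cyclic equivalence of $\sum_c c\,\partial_cW$ with $W$, using that each cycle of $W$ meets $C$ exactly once) is correct and is implicitly needed by the paper too; but it does not by itself give isomorphism-invariance.

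The concrete gap is in your claim that a vertex-fixing automorphism $\psi$ of $\widehat{KQ_C}$ matching two minimal presentations can be transported to a graded right-equivalence ``by extending by the identity on the $\rho_r$-arrows.'' Such a $\psi$ carries the closed ideal $\overline{\langle R\rangle}$ onto $\overline{\langle R'\rangle}$, but it does not carry the set $R$ to the set $R'$ elementwise: one only gets $\psi(r)=\sum_{r'}u_{r'r}\,r'\,v_{r'r}$ (modulo higher terms) for an invertible transition datum with coefficients in $\widehat{KQ_C}$. Consequently the extension must send $\rho_r$ to a corresponding combination of the $\rho_{r'}$ twisted by these coefficients in order for $W_\Lambda=\sum_r\rho_r r$ to map to something cyclically equivalent to $\sum_{r'}\rho_{r'}r'$; extending by the identity fails already when the two minimal generating sets differ by a nontrivial change of basis. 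Making this precise is essentially Keller's invariance theorem for deformed Calabi--Yau completions, i.e.\ the content hidden in Proposition \ref{preproj QP2}. So either carry out that bookkeeping in full (including the check that the resulting isomorphism is degree-preserving for the grading putting the $\rho$-arrows in degree $1$), or replace your key lemma by the paper's appeal to $\widehat{\Pi}_3$. A further small point common to both arguments: to apply Proposition \ref{main3} on the $(Q',W',C')$ side you should note that $k$ is automatically a strict source of $(Q',W',C')$ as well, since $k$ is a source of the common quiver $Q_C\cong Q'_{C'}$ and a cut arrow $c'$ starting at $k$ would force $\partial_{c'}W'=0$, contradicting minimality.
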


For the proof, we recall the following notion and its property. 

\begin{defi}\cite{IO,K}
Let $\Lambda$ be a finite dimensional algebra of global dimension at most 2. 
The \emph{complete 3-preprojective algebra} is defined as the tensor algebra 
$$\widehat{\Pi}_3(\Lambda):={\displaystyle\prod_{\begin{smallmatrix}i \geq0\end{smallmatrix}}}\Ext_\Lambda^2(D\Lambda,\Lambda)^{\otimes^i_\Lambda}$$
of the $(\Lambda,\Lambda)$-module $\Ext_\Lambda^2(D\Lambda,\Lambda)$.
\end{defi}

\begin{prop}\label{preproj QP2}
Let $\Lambda=\widehat{KQ}/\overline{{\langle R\rangle}}$ be a finite dimensional algebra with a minimal set $R$ of relations and $\gl\Lambda\leq2$. 
Then we have an algebra isomorphism $\widehat{\Pi}_3(\Lambda)\cong\PP(Q_\Lambda,W_\Lambda).$ 
\end{prop}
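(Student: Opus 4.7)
The plan is to construct an explicit isomorphism
$\Phi\colon \PP(Q_\Lambda,W_\Lambda)\to\widehat{\Pi}_3(\Lambda)$
by exploiting the minimal projective bimodule resolution of $\Lambda$. Because $R$ is a minimal set of relations and $\gl\Lambda\le 2$, this resolution has the form
$$0\to P^{(2)}\xrightarrow{\ d_2\ } P^{(1)}\xrightarrow{\ d_1\ } P^{(0)}\to\Lambda\to 0,$$
where the indecomposable summands of $P^{(0)}$, $P^{(1)}$, $P^{(2)}$ are in bijection with $Q_0$, $Q_1$, and $R$ respectively; the map $d_1$ is the standard arrow differential and $d_2$ is built from the left/right splittings of each $r\in R$ at each of its letters.

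First I would identify the quiver of $\widehat{\Pi}_3(\Lambda)$ with $Q_\Lambda$. Applying $\Hom_\Lambda(-,\Lambda)$ and computing $\Ext^2$ via the resolution (equivalently, using $D\Lambda$-duality) shows that the bimodule $\Ext_\Lambda^2(D\Lambda,\Lambda)$ has a distinguished set of $(\Lambda,\Lambda)$-bimodule generators $\{\widetilde\rho_r\mid r\in R\}$, with $\widetilde\rho_r\in e_{s(r)}\Ext_\Lambda^2(D\Lambda,\Lambda)e_{e(r)}$. Hence the tensor algebra $\widehat{\Pi}_3(\Lambda)=T_\Lambda(\Ext_\Lambda^2(D\Lambda,\Lambda))$ has precisely the vertices $Q_0$, the old arrows of $Q$ (coming from $\Lambda$), and one new arrow $\widetilde\rho_r\colon e(r)\to s(r)$ for each $r\in R$; this is exactly $Q_\Lambda$.

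Next I would define $\Phi$ on generators by sending $a\in Q_1$ to its image in $\Lambda\subset \widehat{\Pi}_3(\Lambda)$ and $\rho_r\in C_\Lambda$ to $\widetilde\rho_r$, and then check that $\Phi$ annihilates every cyclic derivative $\partial_a W_\Lambda$. For $a=\rho_r$, the derivative $\partial_{\rho_r}W_\Lambda=r$ vanishes in $\Lambda$. For $a\in Q_1$, writing $\partial_a W_\Lambda=\sum_{r\in R}\partial_a(\rho_r r)$ and expanding $\partial_a$ of each cyclic word $\rho_r r$ produces precisely the formal expression obtained by splitting $r$ at the letter $a$ and inserting $\rho_r$; under $\Phi$ this becomes the bimodule commutation relation satisfied by $\widetilde\rho_r$ inside $T_\Lambda(\Ext^2_\Lambda(D\Lambda,\Lambda))$, which is exactly what $d_2$ records on the generator for $r$.

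Finally, both algebras are complete topological algebras carrying a grading with degree-zero part $\Lambda$ and degree-one part (isomorphic to) $\Ext^2_\Lambda(D\Lambda,\Lambda)$ — using the grading $d_{C_\Lambda}$ on the Jacobian side and the tensor grading on the tensor-algebra side. Since $\Phi$ is a continuous, degree-preserving algebra homomorphism that is an isomorphism in degrees $0$ and $1$ and sends a generating set of relations to a generating set of relations, a standard filtration/completeness argument upgrades it to an isomorphism of complete algebras. The principal technical hurdle is the explicit identification of the cyclic derivatives $\partial_a W_\Lambda$ for $a\in Q_1$ with the bimodule commutation rules built into $d_2$; this is essentially the original motivation for Keller's definition of $W_\Lambda$, so one may invoke \cite{K} rather than re-deriving it.
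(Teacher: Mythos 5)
The paper does not actually prove this proposition: it simply states that the result is ``known for experts'' and refers to Keller \cite{K}, where a more general statement is established. Your proposal, by contrast, sketches the standard direct argument, and it is correct in outline. The ingredients you isolate are the right ones: (i) the start of the minimal projective bimodule resolution of $\Lambda$ has terms indexed by $Q_0$, $Q_1$ and $R$, and it stops there because $\pd_{\Lambda^e}\Lambda=\gl\Lambda\le 2$ (here algebraic closedness of $K$ and minimality of $R$ both enter); (ii) the bimodule identification $\Ext^2_\Lambda(D\Lambda,\Lambda)\cong\Ext^2_{\Lambda^e}(\Lambda,\Lambda\otimes\Lambda)$, so that applying $\Hom_{\Lambda^e}(-,\Lambda\otimes\Lambda)$ to the resolution presents this bimodule with one generator $\widetilde\rho_r$ per relation $r$ (placed as an arrow from $e(r)$ to $s(r)$) and one bimodule relation per arrow $a\in Q_1$, namely the transpose of $d_2$ evaluated at the generator dual to $a$, which is exactly $\partial_a\bigl(\sum_{r}\rho_r r\bigr)=\partial_aW_\Lambda$ with $\rho_r$ replaced by $\widetilde\rho_r$; (iii) right-exactness of $\otimes_\Lambda$ turns this presentation of the bimodule into a presentation of the completed tensor algebra, whose defining relations are therefore $\{r\}_{r\in R}\cup\{\partial_aW_\Lambda\}_{a\in Q_1}=\{\partial_aW_\Lambda\}_{a\in (Q_\Lambda)_1}$, giving $\widehat{\Pi}_3(\Lambda)\cong\PP(Q_\Lambda,W_\Lambda)$. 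Since you acknowledge that the identification in (ii)--(iii) is essentially Keller's computation and permit yourself to cite \cite{K} for it, your argument is at least as complete as the paper's. The one point you should make explicit rather than absorb into ``the standard resolution'' is the exactness of the three-term bimodule complex at its left end, i.e.\ the injectivity of $d_2$: this is not automatic from the shape of the complex, but follows because $\Ker d_1$ is a projective bimodule when $\gl\Lambda\le 2$ and $P^{(2)}\to\Ker d_1$ is a projective cover precisely by the minimality of $R$.
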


The above result is known for experts and also follows from the recent result of Keller \cite{K}, which is treating more general situations. %For the convenience of the reader, we give a proof.

This result immediately yields the following result.
\begin{cor}\label{3-preproj}
Let $(Q,W,C)$ and $(Q',W',C')$ be QPs of $\QQ_1$.
If $\PP(Q,W,C) \cong\PP(Q',W',C')$, then we have $\PP(Q,W) \cong\PP(Q',W')$.
\end{cor}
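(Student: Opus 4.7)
The plan is to pass through the complete 3-preprojective algebra $\widehat{\Pi}_3(-)$, which by Proposition \ref{preproj QP2} recovers the full Jacobian algebra $\PP(Q,W)$ from the truncated one $\PP(Q,W,C)$. The key observation is that $\widehat{\Pi}_3(\Lambda)$ is defined intrinsically in terms of the $\Lambda$-bimodule $\Ext^2_\Lambda(D\Lambda,\Lambda)$, so any algebra isomorphism $\PP(Q,W,C)\cong\PP(Q',W',C')$ automatically induces $\widehat{\Pi}_3(\PP(Q,W,C))\cong\widehat{\Pi}_3(\PP(Q',W',C'))$, provided we know these algebras have global dimension at most $2$ --- which is exactly part of the definition of an algebraic cut.

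Concretely, I would set $\Lambda:=\PP(Q,W,C)$ and $\Lambda':=\PP(Q',W',C')$. Since $(Q,W,C),(Q',W',C')\in\QQ_1$, both $\Lambda$ and $\Lambda'$ have global dimension at most $2$, so Proposition \ref{preproj QP2} applies and yields
\[\widehat{\Pi}_3(\Lambda)\cong\PP(Q_\Lambda,W_\Lambda)\qquad\text{and}\qquad \widehat{\Pi}_3(\Lambda')\cong\PP(Q_{\Lambda'},W_{\Lambda'}).\]
The hypothesis $\Lambda\cong\Lambda'$ then gives $\widehat{\Pi}_3(\Lambda)\cong\widehat{\Pi}_3(\Lambda')$, so it only remains to identify $\PP(Q_\Lambda,W_\Lambda)$ with $\PP(Q,W)$, and likewise on the primed side.

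For this identification, I would start from the presentation $\Lambda=\widehat{KQ_C}/\overline{\langle\partial_cW\mid c\in C\rangle}$ and take $R:=\{\partial_cW\mid c\in C\}$ as the minimal set of relations to feed into Definition \ref{construct QP} (applied with the quiver $Q_C$ in place of the $Q$ appearing there): minimality of $R$ is precisely condition (2) of an algebraic cut, and because $(Q,W,C)$ is reduced every cycle in $W$ has length $\geq 3$, so each $\partial_cW$ lies in the square of the Jacobson radical and is indeed a relation in the sense of Section 2. Definition \ref{construct QP} then adjoins, for each $c\in C$, a new arrow $\rho_{\partial_cW}\colon e(\partial_cW)\to s(\partial_cW)$. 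Since $s(\partial_cW)=e(c)$ and $e(\partial_cW)=s(c)$, I would identify $\rho_{\partial_cW}$ with the original arrow $c$; this identifies $Q_\Lambda$ with $Q$ and gives
\[W_\Lambda=\sum_{c\in C}\rho_{\partial_cW}\,\partial_cW=\sum_{c\in C}c\,\partial_cW,\]
which is cyclically equivalent to $W$ because every cyclic term of $W$ contains exactly one arrow of $C$. Hence $\PP(Q_\Lambda,W_\Lambda)\cong\PP(Q,W)$, and the analogous statement holds for the primed data.

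Chaining the isomorphisms gives $\PP(Q,W)\cong\widehat{\Pi}_3(\Lambda)\cong\widehat{\Pi}_3(\Lambda')\cong\PP(Q',W')$, which is the desired conclusion. The only step that is not entirely formal is the identification of $(Q_\Lambda,W_\Lambda)$ with $(Q,W)$ in the third paragraph; the algebraic and reducedness hypotheses on $(Q,W,C)$ are designed precisely to make this bookkeeping work, so I do not expect a serious obstacle.
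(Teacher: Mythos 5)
Your proposal is correct and follows the same route as the paper: both pass through the complete $3$-preprojective algebra via Proposition \ref{preproj QP2} and chain the isomorphisms $\PP(Q,W)\cong\widehat{\Pi}_3(\PP(Q,W,C))\cong\widehat{\Pi}_3(\PP(Q',W',C'))\cong\PP(Q',W')$. The only difference is that you spell out the identification of $(Q_\Lambda,W_\Lambda)$ with $(Q,W)$ (using reducedness and minimality from the algebraic-cut hypothesis), which the paper leaves implicit when it invokes the proposition.
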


\begin{proof}
Since we have $\PP(Q,W)\cong\widehat{\Pi}_3(\PP(Q,W,C))$ by Proposition \ref{preproj QP2}, 
we obtain  
$$\PP(Q,W)\cong\widehat{\Pi}_3(\PP(Q,W,C))\cong\widehat{\Pi}_3(\PP(Q',W',C')) \cong\PP(Q',W').$$
\end{proof}

Then we give a proof of Theorem \ref{app1}.

\begin{proof}
We assume that $k$ is a strict source. The proof is similar for a strict sink. 
By the assumption and Corollary \ref{3-preproj},  
we have $\PP(Q,W)\cong\PP(Q',W')$. 
On the other hand, by Theorem \ref{main}, $\PP(\mu_k^L(Q,W,C))$ is determined by the algebra $\PP(Q,W,C)$ 
and does not depend on the choice of a QP. 
Therefore, we have $\PP(\mu_k^L(Q,W,C)) \cong\PP(\mu_k^L(Q',W',C'))$. 
Then, by Theorem \ref{algebraic} and Corollary \ref{3-preproj}, we have $\PP(\mu_k(Q,W)) \cong\PP(\mu_k(Q',W')).$
\end{proof}

\subsection*{Funding}
This work was supported by Grant-in-Aid for Japan Society for the Promotion of Science Fellowships No.23.5593.

\subsection*{Acknowledgments}
First and foremost, the author would like to thank O. Iyama for his support and patient guidance.
The author wishes to thank M. Herschend, S. Oppermann and D. Yang for their comments and stimulating discussions.
The author is grateful to K. Yamaura and T. Adachi for their help and advice. 
The author is very grateful to the anonymous referee for his or her carefully reading the manuscript, and for the valuable comments and suggestions, which improve the presentation.
%%%%%%%%%%%%%%%%%

\end{document}